\documentclass[sts]{imsart}

\usepackage{geometry}
\usepackage{amssymb,amsbsy,amsmath,amscd,amsthm,amsfonts,MnSymbol}
\usepackage{cite}
\usepackage[utf8]{inputenc}
\usepackage{enumerate,hyperref}
\usepackage{dsfont}
\usepackage{graphicx}
\usepackage{tikz}

\newtheorem{theorem}{Theorem}

\newtheorem{assumption}{Assumption}

\newtheorem{lemma}{Lemma}
\newtheorem{remark}{Remark}

\newcommand{\R}{\mathbb R}

\newcommand{\E}{\mathbb E}

\newcommand{\KL}{\textrm{KL}}
\newcommand{\TV}{\textrm{TV}}

\newcommand*\diff{\mathop{}\!\mathrm{d}}

\newcommand{\inter}{\mathrm{int}}

\newcommand{\relint}{\mathrm{relint}}

\newcommand{\pn}{^{(n)}}

\newcommand{\DS}{\displaystyle}

\DeclareMathOperator*{\argmin}{argmin}

\begin{document}

\begin{frontmatter}

\title{Bernstein-von Mises theorem for log-concave posteriors}
\runtitle{Log-concave BvM}

\author{Victor-Emmanuel Brunel \thanks{CREST-ENSAE, victor.emmanuel.brunel@ensae.fr}}

\runauthor{V.-E. Brunel}

\setattribute{abstractname}{skip} {{\bf Abstract:} } 
\begin{abstract}
We prove new, general versions of Bernstein-von Mises theorem for both well-specified and misspecified models when the log-likelihood is concave in the parameter and the prior distribution is log-concave. Unlike classical versions of Bernstein-von Mises theorem, our versions do not require technical smoothness assumptions, and they solely rely on convex analysis.
\end{abstract}

\begin{keyword}
	Bayesian estimation, Bernstein-von Mises theorem, convex analysis, log-concavity, model misspecification.
\end{keyword}

\end{frontmatter}

\section{Introduction}

\subsection{Framework} \label{sec:framework}
Let $(E,\mathcal E)$ be a measurable space and consider a family $(P_\theta)_{\theta\in \Theta_0}$ of probability distributions on $(E,\mathcal E)$ that satisfy the following:
\begin{itemize}
	\item The set $\Theta_0$ is a non-empty, open, convex subset of $\R^d$ for some $d\geq 1$;
	\item The family $(P_\theta)_{\theta\in \Theta_0}$ is dominated -- that is, there exists a $\sigma$-finite measure $\mu$ on $(E,\mathcal E)$ such that $P_\theta$ has a density $f_\theta$ with respect to $\mu$, for all $\theta\in\Theta_0$;
	\item For all $\theta\in \Theta_0$ and for $\mu$-almost all $x\in E$, $f_\theta(x)>0$;
	\item For $\mu$-almost all $x\in E$, the map $\theta\in\Theta_0\mapsto \phi(x,\theta):=-\log f_\theta(x)$ is convex.
\end{itemize}

Consider a sequence $X_1,X_2,\ldots$ of independent, identically distributed (i.i.d.) random variables on $E$, whose distribution is given by $P_{\bar\theta}$ for some $\bar\theta\in\Theta_0$. Let $\pi$ be a log-concave distribution on $\R^d$ whose support is a (closed) convex set $\Theta\subseteq \Theta_0$. Throughout this work, we assume that $\Theta$ has non-empty interior. By \cite[Theorem 3.2]{borell1975convex}, this implies that $\pi$ has a density with respect to the Lebesgue measure of the form $e^{-V}$ for some convex function $V:\R^d\to\R\cup\{\infty\}$ with $V(\theta)<\infty$ for all $\theta\in\Theta$ and $V(\theta)=\infty$ for all $\theta\notin \Theta$.

In a Bayesian framework, if $\pi$ is seen as a prior distribution, then for each $n\geq 1$, the posterior distribution, denoted by $\tilde Q_n$, is the (random) log-concave probability measure on $\R^d$ whose density with respect to the Lebesgue measure is given by:
\begin{equation} \label{eqn:posterior}
	\tilde q_n(\theta)=\tilde Z_ne^{-n\Phi_n(\theta)-V(\theta)}\mathds 1_{\theta\in \Theta}, \quad \forall \theta\in\Theta_0
\end{equation}
where $\Phi_n(\theta)=n^{-1}\sum_{i=1}^n \phi(X_i,\theta)$, $\theta\in\Theta_0$, is the negative log-likelihood and $\tilde Z_n$ is a (random) normalizing constant (see Theorem~\ref{THMPROPER} below). 

This work is concerned with the asymptotic posterior distribution of rescaled versions of $\theta$ given $X_1,\ldots,X_n$, where the appropriate rescalings will depend on natural, geometric conditions. We assume that the map $V$ is well-behaved, that is, $V(\theta^*)\neq \infty$ and $V$ is differentiable at $\theta^*$ (along the tangent cone $T$). This is only a stringent condition when $\theta^*$ is on the boundary of $\Theta$ since it excludes the case when the prior density $e^{-V}$ can vanish as $\theta$ approaches $\theta^*$. While this is an interesting case, it would require further technical arguments, which we leave for future work. In the rest of this work, for simplicity, we assume that $V=\delta_\Theta$, the indicatrix of $\Theta$, that is, that the (possibly improper) prior density is $\mathds 1_{\cdot\in\Theta}$. We let the reader check that given the assumption above, this is merely a presentation simplification, rather than a technical one.

In what follows, we denote by 
$$\Phi(\theta)=\E[\phi(X_1,\theta)]=\int_{E}\phi(x,\theta)e^{-\phi(x,\theta^*)}\diff\mu(x) \in \R\cup\{\infty\}, \quad \theta\in\Theta_0.$$
The function $\Phi$ is convex and for simplicity, we assume that $\Phi(\theta)<\infty$ for all $\theta\in\Theta_0$. Note that $\Phi(\theta)-\Phi(\bar\theta)=\KL(P_{\bar\theta}\|P_\theta)$, where $\KL$ stands for the Kullback-Leibler divergence. Throughout this work, we denote by $\theta^*=\argmin_{\theta\in\Theta}\Phi(\theta)$ and we assume that $\Phi$ is twice continuoulsy differentiable in a neighborhood of $\theta^*$ and that $\nabla^2\Phi(\theta^*)$ is positive definite. When $\theta^*\in\inter(\Theta)$, then $\bar\theta=\theta^*$ and we say that the Bayesian model is \textit{well-speicified}. When $\theta^*$ is on the boundary of $\Theta$ and $\nabla\Phi(\theta)=0$, then again, $\bar\theta=\theta^*$. In that case, we say that the Bayesian model is \textit{nearly misspecified}. Finally, when $\theta^*$ is on the boundary of $\Theta$ and $\nabla\Phi(\theta^*)\neq 0$, we say that the Bayesian model is \textit{misspecified}: The support of the prior distribution $\pi$ fails at containing $\bar\theta$. In that case, $\theta^*$ is simply the value of the parameter $\theta\in\Theta$ for which $P_\theta$ is as close to $P_{\bar\theta}$ -- the distribution of the $X_i$'s -- as possible. In other words, $P_{\theta^*}$ is the best KL approximation of the data distribution within the model $(P_{\theta})_{\theta\in\Theta}$. In that case, the first order optimality condition for $\theta^*$ states that $-\nabla\Phi(\theta^*)$ must be in the normal cone of $\Theta$ at $\theta^*$. 

Note that positive definiteness of $\nabla^2\Phi(\theta^*)$ implies that $\theta^*$ is identifiable, i.e., $P_\theta=P_{\theta^*}$ can only happen if $\theta=\theta^*$, and hence, that $\theta^*$ is the unique minimizer of $\Phi$ on $\Theta$. In the well-specified and nearly misspecified cases, the matrix $\nabla^2\Phi(\theta^*)$ is simply the Fisher information matrix, which is therefore positive definite. Moreover, convexity of $\phi$ in $\Theta$ and the fact that $\Phi$ is twice continuously differentiable in a neighborhood of $\theta^*$ yield that the posterior distribution is proper, validating the definition of $\tilde Z_n$ in \eqref{eqn:posterior}, as shown by the following theorem.

\begin{theorem} \label{THMPROPER}
	Under the assumptions above, with probability $1$, the map $\theta\in\Theta_0\mapsto e^{-n\Phi_n(\theta)}\mathds 1_{\theta\in\Theta}$ has a finite integral with respect to the Lebesgue measure. 
\end{theorem}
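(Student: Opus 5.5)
The plan is to show that, almost surely for all $n$ large enough (and then for every $n\ge 1$), the convex function $\Phi_n+\delta_\Theta$ grows at least linearly at infinity. A convex function $g:\R^d\to\R\cup\{\infty\}$ satisfies $\int e^{-g}<\infty$ as soon as it has bounded, nonempty sublevel sets, because then $g(\theta)\ge a\|\theta\|-b$ for some $a>0$ and $b\in\R$. So the whole statement reduces to a coercivity property of $n\Phi_n$ restricted to $\Theta$. Note that $\Theta\subseteq\Theta_0$ may be unbounded, and that $\Phi_n$ is finite on $\Theta_0$ almost surely (each $\phi(X_i,\cdot)$ is finite, since $f_\theta>0$ $\mu$-a.e.).

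\textbf{Step 1 (deterministic coercivity of $\Phi$).} Since $\nabla^2\Phi(\theta^*)\succ0$ and $\Phi$ is $C^2$ near $\theta^*$, there is a radius $r>0$ with $\bar B(\theta^*,r)\subseteq\Theta_0$ and a constant $c>0$ such that, for all $\theta\in\Theta$ with $\|\theta-\theta^*\|=r$,
\[
\Phi(\theta)\ge\Phi(\theta^*)+\langle\nabla\Phi(\theta^*),\theta-\theta^*\rangle+cr^2\ge\Phi(\theta^*)+cr^2 .
\]
The last inequality uses the first-order optimality condition: $-\nabla\Phi(\theta^*)$ lies in the normal cone of $\Theta$ at $\theta^*$, so the linear term is nonnegative on $\Theta$.

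\textbf{Step 2 (uniform convergence on the sphere).} The sphere $S=\{\theta:\|\theta-\theta^*\|=r\}$ is compact and lies inside the open set $\Theta_0$. The functions $\Phi_n$ are convex and finite on $\Theta_0$ and converge pointwise almost surely to $\Phi$ on a countable dense set, by the strong law of large numbers. The classical result on convex functions (Rockafellar, Theorem 10.8) upgrades this to almost sure uniform convergence on compact subsets of $\Theta_0$. Hence almost surely, for all $n\ge N(\omega)$,
\[
\Phi_n(\theta)\ge\Phi_n(\theta^*)+cr^2/2 \quad\text{for all }\theta\in S\cap\Theta .
\]
I expect this uniform convergence to be the main technical point. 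It is standard, but it must be applied on $\Theta_0$ rather than on $\Theta$, because $\theta^*$ may lie on the boundary of $\Theta$.

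\textbf{Step 3 (convexity propagates linear growth).} Take $\theta\in\Theta$ with $\|\theta-\theta^*\|>r$, and set $t=r/\|\theta-\theta^*\|$. The point $\theta_t=\theta^*+t(\theta-\theta^*)$ lies in $S\cap\Theta$ because $\Theta$ is convex. Convexity gives $\Phi_n(\theta_t)\le(1-t)\Phi_n(\theta^*)+t\Phi_n(\theta)$, and rearranging yields
\[
\Phi_n(\theta)-\Phi_n(\theta^*)\ge\frac{cr}{2}\|\theta-\theta^*\| .
\]
On the bounded set $\Theta\cap\bar B(\theta^*,r)$, the integrand is at most $1$ after factoring out $e^{-n\Phi_n(\theta^*)}$. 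Together these bounds give integrability for every $n\ge N$.

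\textbf{Step 4 (extension to all $n$).} For a small $n<N$, write $n\Phi_n=N\Phi_N-\sum_{i=n+1}^N\phi(X_i,\cdot)$. This shows that it is not enough to argue one sample size at a time, so I would instead argue per index. Each $\phi(X_i,\cdot)$ is convex, and its recession function is almost surely nonnegative in every direction of the recession cone of $\Theta$. If some direction gave negative recession, then $\Phi$ would have negative recession along it, contradicting Step 3 applied to $\Phi$ itself. Moreover, strict positivity of the recession function of $n\Phi_n$ is required only in the directions of that cone. This last reduction (the small-$n$ case) is the delicate point. If it fails in general, I would state the result as holding almost surely for all sufficiently large $n$, which is what the asymptotic results need.
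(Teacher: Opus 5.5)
Your Steps 1--3 are the paper's proof. The paper likewise uses the local quadratic lower bound on $\Phi$ from positive definiteness of $\nabla^2\Phi(\theta^*)$, and the first-order optimality condition to discard the linear term on $\Theta$. It also uses almost sure uniform convergence of the convex $\Phi_n$ on a compact ball around $\theta^*$ inside $\Theta_0$. You derive that from the strong law on a countable dense set plus Rockafellar's Theorem 10.8; the paper cites Corollary 1 of \cite{brunel2025asymptotics}. Finally, both use the radial convexity argument giving $\Phi_n(\theta)-\Phi_n(\theta^*)\geq \frac{cr}{2}\|\theta-\theta^*\|$ outside the ball. This gives integrability almost surely for all $n\geq N(\omega)$, which is exactly what the paper proves and the only sense in which the statement can hold. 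One slip: on $\Theta\cap\bar B(\theta^*,r)$ the integrand is not bounded by $1$ after factoring out $e^{-n\Phi_n(\theta^*)}$, because $\theta^*$ minimizes $\Phi$, not $\Phi_n$. Argue instead, as the paper does, that $\Phi_n$ is finite and convex on the open set $\Theta_0\supseteq\bar B(\theta^*,r)$, hence continuous and bounded on that compact ball.

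Step 4 should be dropped: it is not merely delicate, it aims at a false statement. Take the Poisson model in its natural parameter: $E=\mathbb N$ with counting measure, $f_\theta(x)=e^{\theta x-e^\theta}/x!$, and $\Theta_0=\Theta=\R$. Then $\phi(x,\theta)=e^\theta-\theta x+\log x!$ is convex, $\Phi''(\theta)=e^\theta>0$, and $U_i=e^{\theta^*}-X_i$ is square integrable, so every assumption holds. Yet for each fixed $n$, consider the event $\{X_1=\cdots=X_n=0\}$, which has probability $e^{-ne^{\bar\theta}}>0$. On it, $e^{-n\Phi_n(\theta)}=e^{-ne^\theta}\to 1$ as $\theta\to-\infty$, so the map is not integrable with positive probability for every $n$.

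Step 4 also fails on its own terms. Write $h_\infty$ for the recession function of a convex $h$. By monotone convergence of difference quotients, $\Phi_\infty(d)=\E[\phi_\infty(X_1,d)]$. So $\phi_\infty(X_1,d)<0$ on an event of positive probability does not contradict $\Phi_\infty(d)>0$. Nonnegativity would not suffice anyway: integrability requires the recession function of $n\Phi_n$ to be \emph{strictly} positive on every nonzero recession direction of $\Theta$. That is exactly what breaks in the example, where $\phi_\infty(0,-1)=0$. Your fallback, ``almost surely, for all sufficiently large $n$,'' is the correct reading of the theorem.
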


The proof of this theorem, which also is a sanity validation of our whole work, is provided in the appendix, in Section~\ref{sec:proof_thm0}.

The following will be needed in this work. By \cite[Theorem 2]{brunel2025asymptotics}, one can define i.i.d. random variables $U_1,\ldots,U_n$ in $\R^d$ with the property that, for each $i=1,\ldots,n$, $U_i$ is almost surely a subgradient of $\phi(X_i,\cdot)$ at $\theta^*$. Moreover, \cite[Theorem 3]{brunel2025asymptotics} yields that the $U_i$'s are integrable, with $\E[U_i]=\nabla\Phi(\theta^*)$. We further make the following assumption on the $U_i$'s, which would be granted automatically by \cite[Theorem 3 and Remark 2]{brunel2025asymptotics}, if $\phi(X_1,\theta)$ were square integrable for all $\theta$ in an arbitrarily small neighborhood of $\theta^*$.

\begin{assumption} \label{ass:subgradients_integr}
	$\E[\|U_1\|^2]<\infty$.
\end{assumption}

Our work is organized as follows. In Section~\ref{sec:case1}, we cover the well-specified case. In Section~\ref{sec:case2}, we deal with the nearly misspecified case and Section~\ref{sec:case3} is devoted to the misspecified case. The misspecified case is fundamentally different from the previous two, and requires more geometric tools. We will see that the appropriate scaling of $\theta$ differs from one case to the other. Only in the last case, we will assume that $\Theta$ can be written as a set of finitely many constraints defined by twice differentiable, convex functions. In the conclusion (\ref{sec:conclusion}), we briefly discuss why no assumption on $\Theta$ is needed to cover the first two cases, while some assumptions on the boundary of $\Theta$, at least in the vicinity of $\theta^*$, seem necessary in the misspecified case.

\subsection{Related work and contributions}

In a Bayesian context, Bernstein-von Mises theorem gives the limiting shape -- typically Gaussian -- of a posterior distribution as the sample sizes goes to infinity \cite{le1953some,walker1969asymptotic,dawid1970limiting}.
Common assumptions are technical, including smoothness of the log-likelihood function as well as uniform domination conditions. Extensions of Bernstein-von Mises to non-regular models have also been explored for exponential families using likelihood geometry \cite{goehle2024bayesian}. 

Bernstein-von Mises theorem has also been extended to non-parametric and high-dimensional setups \cite{castillo2013nonparametric} (see \cite{castillo2024bayesian} for a recent review). Closely related to our work, a line of work is concerned with regular, misspecified parametric models, that is, when the support of the prior distribution does not contain the true parameter of the data distribution \cite{kleijn2012bernstein,bochkina2014bernstein,bochkina2019bernstein}. However, these works also rely on technical conditions on the log-likelihood function and they typically assume a very simple form for $\Theta$, e.g., $\Theta=(\R_+)^d$. Laplace approximations for error quantification Bernstein-von Mises theorem have also been investigated in a recent line of work \cite{katsevich2025improved,spokoiny2025inexact}.

In this work, we are concerned with extensions of the classical Bernstein-von Mises theorem where we only assume log-concavity of the posterior distribution. Our goal is to show that, under such convexity condition -- with additional second order differentiability of the Kullback-Leibler divergence around the $\theta^*$ -- no smoothness/domination assumption is needed. 
We consider both well-specified and misspecified models, where the support of the log-concave prior distribution may not contain the true parameter. Unlike recent works \cite{bochkina2014bernstein}, we consider a large class of supports, including smooth convex sets and polytopes. In particular, we show that the limiting shape of the posterior distribution is supported on the second-order tangent set of $\Theta$ at $\theta^*$ along the direction of $-\nabla\Phi(\theta^*)$. To the best of our knowledge, our results are new even in the well-specified case. 

A parallel should be drawn with the asymptotic analysis of maximum likelihood estimators (MLE), where classical results derive the asymptotic distribution of a rescaled version of the MLE under technical smoothness/domination assumptions on the log-likelihood in well-specified models \cite{lecam1970assumptions,van1996weak,van2000asymptotic} as well as misspecified models where a constrained MLE is studied \cite{geyer1994asymptotics}. The assumptions made in these works are similar to these made for classical Bernstein-von Mises theorem. In a recent work, \cite{brunel2025asymptotics} derives the asymptotic distribution of rescaled versions of constrained MLE (more generally, constrained $M$-estimators), under minimal convexity assumptions on the loss function and the constraint set, allowing to consider non-smooth models. Our work builds on similar ideas as \cite{brunel2025asymptotics}, borrowed from convex analysis and convex optimization.

\subsection{Notation}

The interior of a set $A\subseteq \R^d$ is denoted by $\inter(A)$. If $A$ is a convex set, its relative interior is denoted by $\relint(A)$: This is the interior of $A$ relative to the topology of the affine span of $A$. 

Given two random variables $U$ and $V$, we denote by $P_{U|V}$ the conditional distribution of $U$ given $V$. 

The distance in total variation between two Borel probability measures $P$ and $Q$ on $\R^d$ is defined as 
$$\TV(P,Q)=\sup_{B\in\mathcal B(\R^d)}|P(B)-Q(B)|=\int_{\R^d}|p(x)-q(x)|\diff\mu(x)$$
where $\mathcal B(\R^d)$ is the Borel $\sigma$-algebra of $\R^d$ and $p$ and $q$ are the respective densities of $P$ and $Q$ with respect to a common dominating measure $\mu$ ($\mu$ will be the Lebesgue measure in this work).

In what follows, we denote by $T$ the tangent cone of $\Theta$ at $\theta^*$, that is, 
$\DS T=\bigcup_{p\geq 1} p(\Theta-\theta^*)$,
and by $C$ its closure, called the support cone of $\Theta$ at $\theta^*$. The normal cone of $\Theta$ at $\theta^*$ is denoted by $N$, and it is the polar of $C$: $N=\{t\in\R^d:t^\top (\theta-\theta^*)\leq 0,\forall \theta\in\Theta\}=\{t\in\R^d=t^\top u\leq 0, \forall u\in C\}$.

\section{Well-specified case} \label{sec:case1}

In this section, we assume that $\theta^*$ is an interior point of $\Theta$. As a consequence, $C=\R^d$ and the first order condition for $\theta^*$ simply reads as $\nabla\Phi(\theta^*)=0$. In particular, this means that the constrained and unconstrained minimizers of $\Phi$ coincide. In that case, we consider the following scaling of $\theta$: $t=\sqrt n(\theta-\theta^*)$ and our objective is to derive the asymptotic behavior of the posterior distribution $Q_n$ of $t$ given $X_1,\ldots,X_n$. The main result of this section is the following asymptotic normality of the posterior distribution, which recovers the classical Bernstein-von Mises theorem in smooth setups.Denote by $Y_n=\frac{1}{\sqrt n}\sum_{i=1}^nU_i$. 

\begin{theorem} \label{thm:main_case1}
Let Assumption~\ref{ass:subgradients_integr} hold and assume that $\theta^*\in\inter(\Theta)$. Then,
	$$\TV(Q_n,\mathcal N_d(-\nabla^2\Phi(\theta^*)^{-1}Y_n,\nabla^2\Phi(\theta^*)^{-1}))\xrightarrow[n\to\infty]{} 0$$
in probability.
\end{theorem}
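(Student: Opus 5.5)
The plan is to work with the rescaled log-posterior. For $t\in\sqrt n(\Theta-\theta^*)$, set
$$G_n(t)=n\Phi_n(\theta^*+t/\sqrt n)-n\Phi_n(\theta^*),$$
so that $Q_n$ has density proportional to $e^{-G_n(t)}\mathds 1_{t\in\sqrt n(\Theta-\theta^*)}$. Write $H=\nabla^2\Phi(\theta^*)$ and compare $G_n$ with the quadratic
$$L_n(t)=Y_n^\top t+\tfrac12 t^\top H t .$$
Note that $e^{-L_n}$ is proportional to the density of $\mathcal N_d(-H^{-1}Y_n,H^{-1})$.

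The first step is a pointwise approximation. Fix $t$ and consider
$$R_n(t)=G_n(t)-L_n(t)=\sum_{i=1}^n\Big[\phi(X_i,\theta^*+t/\sqrt n)-\phi(X_i,\theta^*)-U_i^\top t/\sqrt n\Big].$$
This is a sum of i.i.d. nonnegative terms, by the subgradient property. Its mean is
$$n\big[\Phi(\theta^*+t/\sqrt n)-\Phi(\theta^*)\big]-\nabla\Phi(\theta^*)^\top t\sqrt n,$$
using $\E U_i=\nabla\Phi(\theta^*)=0$, and this tends to $\tfrac12 t^\top Ht$ by the $C^2$ assumption. For the variance, each term $D_{n,i}$ lies in $[0,(V_i-U_i)^\top t/\sqrt n]$, where $V_i$ is a subgradient of $\phi(X_i,\cdot)$ at $\theta^*+t/\sqrt n$ (monotonicity of subgradients). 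As in \cite{brunel2025asymptotics}, the variance is then controlled by $\E[D_{n,1}^2]\le \E[D_{n,1}\,(V_1-U_1)^\top t]/\sqrt n$. A Lindeberg-type argument shows this is $o(1/n)$, using Assumption~\ref{ass:subgradients_integr} and the fact that $D_{n,1}\to0$ a.s. Hence $R_n(t)\to \tfrac12 t^\top Ht$ in probability, i.e.\ $G_n(t)-L_n(t)\to0$ in probability for each fixed $t$. The standard convexity lemma (pointwise convergence of convex random functions implies uniform convergence on compacts; Andersen--Gill / Pollard) then upgrades this to
$$\sup_{\|t\|\le K}|G_n(t)-L_n(t)|\to0\quad\text{in probability, for every }K.$$
Here $\sqrt n(\Theta-\theta^*)$ eventually contains every ball, since $\theta^*$ is interior.

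The second step, which I expect to be the main obstacle, is tail control, since no global domination is available. Convexity replaces it. On the sphere $\|t\|=K$ we have, with high probability,
$$G_n(t)\ge \tfrac12\lambda_{\min}(H)K^2-\|Y_n\|K-1,$$
and $Y_n=O_P(1)$ by the CLT under Assumption~\ref{ass:subgradients_integr}. Since $G_n(0)=0$ and $G_n$ is convex, for $\|t\|\ge K$ we get
$$G_n(t)\ge \frac{\|t\|}{K}\min_{\|s\|=K}G_n(s)\ge c\,K\|t\|$$
for $K$ large. So $e^{-G_n}$ is dominated outside the ball by $e^{-cK\|t\|}$, whose integral is small uniformly in $n$ with high probability. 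The Gaussian tail is handled the same way.

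Finally, I combine the two steps via the inequality
$$\TV\Big(\tfrac{f}{\int f},\tfrac{g}{\int g}\Big)\le \frac{2\int|f-g|}{\int g}.$$
Apply it with $f=e^{-G_n}\mathds 1$ and $g=e^{-L_n}$. The normalizer $\int g=(2\pi)^{d/2}\det(H)^{-1/2}e^{\frac12 Y_n^\top H^{-1}Y_n}$ is bounded below. Split $\int|f-g|$ into the ball $\|t\|\le K$, where uniform convergence makes it $o_P(1)$ because $e^{-L_n}$ is bounded there with high probability, and the complement, where both tails are small. Letting $n\to\infty$ and then $K\to\infty$ gives $\TV\to0$ in probability. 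Properness of $Q_n$ is guaranteed by Theorem~\ref{THMPROPER}.
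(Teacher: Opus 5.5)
Your overall structure is the paper's: a uniform quadratic approximation of the rescaled log-likelihood on compacts (Lemma~\ref{lem:UnifConv}), tightness of $Y_n$, convexity turning the bound on a sphere into a linear lower bound outside the ball, and integration. Your closing inequality $\int|f/\int f-g/\int g|\le 2\int|f-g|/\int g$, with the explicit Gaussian normalizer bounded below, packages the paper's two-stage argument (first $Z_n\to1$, then the positive-part identity) more compactly. You also avoid reducing to $\nabla^2\Phi(\theta^*)=I_d$. These steps are all fine.

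The step that fails as written is your self-contained derivation of the pointwise approximation. The paper does not attempt this; it imports Lemma~\ref{lem:UnifConv} from \cite{brunel2025asymptotics}. (Also, the sum you display equals $G_n(t)-Y_n^\top t$, not $G_n(t)-L_n(t)$.)

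From $n\E[D_{n,1}^2]\le\E[\sqrt n D_{n,1}(V_1-U_1)^\top t]$ you need two things. First, $\sqrt n D_{n,1}\to0$ a.s., not just $D_{n,1}\to0$. This is true, since differentiability of $\Phi$ at $\theta^*$ forces $\phi(X_1,\cdot)$ to be a.s.\ differentiable there. Second, you need a square-integrable dominating variable. The natural one is $((V-U_1)^\top t)^2$ with $V$ a subgradient at a fixed point $\theta^*+t/\sqrt{n_0}$, but Assumption~\ref{ass:subgradients_integr} only concerns $U_1$, so it does not control this.

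The ingredients you list really are insufficient. For a general convex loss, take $d=1$, $\theta^*=0$, $B$ with density $c|b|^\beta$ near $0$ for some $\beta>1$, and set $\phi=|B|^{-\beta}(\theta-B)_+$ if $B>0$ and $\phi=|B|^{-\beta}(B-\theta)_+$ if $B<0$. Then $U_1=0$, $\Phi(\theta)=c\theta^2/2$ near $0$, and $\sqrt nD_{n,1}\to0$ a.s. Yet $\E[D_{n,1}^2]=\infty$, and for fixed $t>0$ we have $nP(0<B<t/\sqrt n)\to0$. So with probability tending to one, $G_n(t)-Y_n^\top t=0$ instead of $ct^2/2$. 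Your argument never uses that $\phi$ is a negative log-density, so it cannot rule this out.

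There are two ways to fix this. You can invoke Lemma~\ref{lem:UnifConv} as the paper does. Alternatively, you can add square integrability of subgradients on a neighbourhood of $\theta^*$. Under that assumption your dominated-convergence argument works, because $(V-U_1)^\top t$ is nonnegative and decreases a.s.\ to $0$ as the point approaches $\theta^*$ along the ray. With that lemma in hand, the rest of your proof is correct.
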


Note that $\sqrt n(\hat \theta_n-\theta^*)+\nabla^2\Phi(\theta^*)Y_n\xrightarrow[n\to\infty]{}0$ in probability, as shown in the proof of Theorem~7 in \cite{brunel2025asymptotics}, where $\hat \theta_n$ is the constrained\footnote{since $\theta^*\in\inter(\Theta)$, the constrained and unconstrained maximum likelihood estimators coincide for sufficiently large $n$, with probability $1$, as a consequence of \cite[Theorem 4]{brunel2025asymptotics}.} maximum likelihood estimator. Hence, one can easily check that the above statement can also be written, equivalently, as:
$$\TV(Q_n,\mathcal N_d(\sqrt n(\hat \theta_n-\theta^*),\nabla^2\Phi(\theta^*)^{-1}))\xrightarrow[n\to\infty]{} 0$$
in probability or, also equivalently, as:
$$\TV(P_{\sqrt n(\theta-\hat\theta_n)|X_1,\ldots,X_n},\mathcal N_d(0,\nabla^2\Phi(\theta^*)^{-1}))\xrightarrow[n\to\infty]{} 0$$
in probability (note that $\hat\theta_n$ is a measurable function of $X_1,\ldots,X_n$). 

In particular, when $\Theta_0=\Theta=\R^d$, we retrieve the most classical version of Bernstein-von Mises theorem, under very minimal assumptions: We do not assume any smoothness on the negative log-likelihood function $\phi$, apart from the existence of the second order derivative of its expectation. We believe that this result is new. 

\begin{proof}[Proof of Theorem~\ref{thm:main_case1}]

First, without loss of generality -- and for the sake of simplicity -- let us assume that $\nabla^2\Phi(\theta^*)=I_d$. Indeed, we could consider the rescaling $\tilde t:= \nabla^2\Phi(\theta^*)^{-1/2}t$ instead of $t$, which would require a similar analysis.

Uusing \eqref{eqn:posterior}, note that $Q_n$ has a density with respect to the Lebesgue measure, which is proportional to $t\in\R^d\mapsto e^{-n\left(\Phi_n(\theta^*+t/\sqrt n)-\Phi(\theta^*)\right)}\mathds 1_{t\in \sqrt n(\Theta-\theta^*)}$.

For $t\in\R^d$, denote by 
$$G_n(t)=n\left(\Phi_n(\theta^*+t/\sqrt n)-\Phi(\theta^*)\right)-t^\top Y_n.$$
Then, the density $q_n$ of $Q_n$ can be written as
\begin{equation} \label{eqn:densityqn}
	q_n(t)=\frac{Z_n}{(2\pi)^{d/2}}\exp\left(-G_n(t)-t^\top Y_n-\|Y_n\|^2/2\right) \mathds 1_{t\in\sqrt n(\Theta-\theta^*)}, \quad \forall t\in\R^d,
\end{equation}
for some (random) normalizing constant $Z_n>0$. Our goal is to show that $Z_n\xrightarrow[n\to\infty]{}1$ in probability. 

Fix some $\varepsilon>0$. Under Assumption~\ref{ass:subgradients_integr}, the central limit theorem yields that the sequence $(Y_n)_{n\geq 1}$ is tight. Hence, one can find $R>0$ such that for all $n\geq 1$, 
\begin{equation} \label{eqn:tighness1}
	P(\|Y_n\|\leq R)\geq 1-\varepsilon.
\end{equation}

The following lemma, proven in the proof of \cite[Theorem 7]{brunel2025asymptotics} is fundamental for our analysis.
\begin{lemma} \label{lem:UnifConv}
	For all compact subsets $K\subseteq\R^d$, 
	$$\sup_{t\in K}\left|G_n(t)-\frac{\|t\|^2}{2}\right|\xrightarrow[n\to\infty]{} 0$$
in probability.
\end{lemma}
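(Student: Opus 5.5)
We prove Lemma~\ref{lem:UnifConv} in two stages: pointwise convergence in probability, then an upgrade to uniform convergence on compacts using convexity. Throughout, we keep the normalization $\nabla^2\Phi(\theta^*)=I_d$ and $\nabla\Phi(\theta^*)=0$.

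\emph{Pointwise convergence.} Fix $t\in\R^d$. For $n$ large enough, $\theta^*+t/\sqrt n\in\Theta_0$ because $\Theta_0$ is open. Write
$$G_n(t)=\sum_{i=1}^n W_{n,i},\qquad W_{n,i}:=\phi(X_i,\theta^*+t/\sqrt n)-\phi(X_i,\theta^*)-\frac{t^\top U_i}{\sqrt n}.$$
The $W_{n,i}$ are i.i.d. and, since $U_i$ is a subgradient of $\phi(X_i,\cdot)$ at $\theta^*$, they are nonnegative.

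For the mean, the Taylor expansion of $\Phi$ at $\theta^*$ together with $\E[U_i]=\nabla\Phi(\theta^*)=0$ gives
$$\E[G_n(t)]=n\big(\Phi(\theta^*+t/\sqrt n)-\Phi(\theta^*)\big)\to \frac{\|t\|^2}{2}.$$

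For the variance, we aim to show $\mathrm{Var}(G_n(t))\le n\,\E[W_{n,1}^2]\to0$. Let $v_n\in\partial_\theta\phi(X_1,\theta^*+t/\sqrt n)$ be a measurable subgradient at the shifted point. Monotonicity of the subdifferential gives
$$0\le W_{n,1}\le \frac{t^\top(v_n-U_1)}{\sqrt n}.$$
Hence
$$n\,\E[W_{n,1}^2]\le \E\big[W_{n,1}\,t^\top(v_n-U_1)\big]\cdot \sqrt n .$$
This bound is the delicate point. Instead, we would use $W_{n,1}^2\le W_{n,1}\cdot|t^\top(v_n-U_1)|/\sqrt n$ together with
$$n\,\E[W_{n,1}]=\E[G_n(t)]=O(1).$$
What is then needed is uniform integrability, and
$$\max_{i\le n}\frac{|t^\top(v_{n,i}-U_i)|}{\sqrt n}\to 0$$
in a suitable sense. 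This follows if $v_n\to U_1$ in $L^2$ along the direction $t$, using the one-sided directional derivative $\phi'(X_1,\theta^*;t)$ and monotone convergence of difference quotients of convex functions. Assumption~\ref{ass:subgradients_integr} provides the square integrability needed for a Lindeberg-type truncation: split $W_{n,i}$ according to whether $\|U_i\|\le \eta\sqrt n$ or not. The main obstacle is this control of the fluctuations of $G_n(t)$ without smoothness. If it resists, we would fall back on the technique of \cite[Theorem 7]{brunel2025asymptotics}: show $\sum_i W_{n,i}-\E\sum_i W_{n,i}\to0$ via the convexity and monotonicity of the difference quotients $s\mapsto(\phi(X,\theta^*+st)-\phi(X,\theta^*))/s$ as $s\downarrow0$, which decrease to $\phi'(X,\theta^*;t)$. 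Almost surely this limit equals $t^\top U$ by a.s. differentiability at $\theta^*$, which is implied by differentiability of $\Phi$ at $\theta^*$.

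\emph{Uniform convergence.} The map $t\mapsto G_n(t)$ is convex, since it is a sum of convex functions minus a linear term, and $\|t\|^2/2$ is finite and convex. The classical convexity lemma (Rockafellar, Theorem 10.8, in its in-probability form due to Andersen--Gill and Pollard) then upgrades pointwise convergence in probability on a countable dense set to uniform convergence in probability on compact sets. This is done by taking a finite grid on a slightly larger compact and using the Lipschitz bounds that convexity provides in terms of values on the grid. This concludes the proof.
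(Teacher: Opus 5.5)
\textbf{There is a genuine gap: the fluctuation step is left open, and it cannot be closed from Assumption~\ref{ass:subgradients_integr} alone.} The paper gives no argument of its own here; it defers to the proof of Theorem~7 in \cite{brunel2025asymptotics}. Your two-stage architecture is the standard route: compute $\E[G_n(t)]$ by a Taylor expansion using $\E[U_1]=\nabla\Phi(\theta^*)=0$, prove pointwise convergence, then upgrade by the convexity lemma. The nonnegativity of the $W_{n,i}$, the mean computation and the uniformity step are fine. You also rightly centre at $\Phi_n(\theta^*)$: with the literal $\Phi(\theta^*)$ in the definition of $G_n$, $G_n(0)=\sum_i(\phi(X_i,\theta^*)-\Phi(\theta^*))$ does not tend to $0$.

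The gap is the step $G_n(t)-\E[G_n(t)]\to 0$. It is the whole content of the pointwise claim, and you explicitly leave it open. None of the routes you sketch closes it:
\begin{itemize}
\item $L^2$ convergence $v_n\to U_1$ presupposes square-integrable subgradients at $\theta^*+t/\sqrt n$, whereas the assumption only concerns the subgradient at $\theta^*$.
\item Truncating on $\{\|U_i\|\le \eta\sqrt n\}$ does not control $W_{n,i}$, which can be of order one while $U_i=0$.
\item The difference quotients $D_h:=h^{-1}\big(\phi(X_1,\theta^*+ht)-\phi(X_1,\theta^*)\big)-t^\top U_1$ do decrease to $0$ a.s.\ as $h\downarrow 0$. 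But turning this into $n\,\E[W_{n,1}^2]=\E[D_{1/\sqrt n}^2]\to 0$ requires a square-integrable dominating $D_{h_0}$, i.e.\ $\phi(X_1,\theta)\in L^2$ near $\theta^*$.
\end{itemize}

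\textbf{Counterexample.} Under the stated hypotheses alone, this step, and the lemma itself, is false. Take $d=1$, $\theta^*=0$, and $\phi(X,\theta)=\theta^2/2+(|\theta|/c-1)_+$, where $c=c(X)$ has density $2c$ on $(0,1)$. Then $\Phi(\theta)=\frac32\theta^2$ on $[-1,1]$ and $U\equiv 0$, so Assumption~\ref{ass:subgradients_integr} holds trivially. However,
\[
G_n(t)=\frac{t^2}{2}+\sum_{i=1}^n\Big(\frac{|t|}{\sqrt n\,c_i}-1\Big)_+
\]
equals $t^2/2$ on the event $\{\min_i c_i\ge |t|/\sqrt n\}$, which has probability $(1-t^2/n)^n\to e^{-t^2}$. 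The sum is asymptotically compound Poisson with mean $t^2$: its mean comes from rare large summands. So $G_n(t)$ does not converge in probability to $\frac12\Phi''(0)t^2=\frac32 t^2$. This becomes a genuine negative log-likelihood, convex in $\theta$ near $0$, with the same behaviour, by adding an independent Gaussian location coordinate with large precision together with the log-normaliser.

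\textbf{What closes the gap.} An additional hypothesis is required, for instance $\E[\phi(X_1,\theta)^2]<\infty$ for $\theta$ near $\theta^*$, the condition the paper mentions just before Assumption~\ref{ass:subgradients_integr}. Under it your fallback becomes a complete proof:
\begin{itemize}
\item For $h\le h_0$, $0\le D_h\le D_{h_0}\in L^2$.
\item $D_h\to 0$ a.s., because $\phi'(X_1,\theta^*;t)\ge t^\top U_1$ and both sides have expectation $\Phi'(\theta^*;t)$ (by monotone convergence).
\item Hence $\mathrm{Var}(G_n(t))\le \E[D_{1/\sqrt n}^2]\to 0$ by dominated convergence.
\item Chebyshev's inequality and the convexity lemma then conclude.
\end{itemize}
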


As a consequence, by taking $K=B(0,4R)$, we have that for all sufficiently large $n\geq 1$, with probability at least $1-\varepsilon$, 
\begin{equation} \label{eqn:controlGn} \sup_{\|t\|\leq 4R}|G_n(t)-\|t\|^2/2|\leq \varepsilon.
\end{equation}

Assume for now that $\|Y_n\|\leq R$ and $\sup_{\|t\|\leq 4R}|G_n(t)-\|t\|^2/2|\leq\varepsilon$ -- by the union bound together with \eqref{eqn:tighness1} and \eqref{eqn:controlGn}, this event occurs with probability at least $1-2\varepsilon$ for all sufficiently large $n$.

Convexity of $G_n$ implies that for all $t\in\R^d$ with $\|t\|> 4R$ (writing $4Rt/\|t\|$ as a convex combination of $t$ and $0$), 
\begin{align} 
	G_n(t) & \geq \frac{\|t\|}{4R}G_n(4Rt/\|t\|) - \frac{\|t\|-4R}{4R}G_n(0) \geq \frac{\|t\|}{4R}(8R^2-\varepsilon)-\frac{\|t\|-4R}{4R}\varepsilon \nonumber \\ 
	& = (2R-\varepsilon/(2R))\|t\|+\varepsilon \geq (2R-\varepsilon/(2R))\|t\|. \label{eqn:controlFn}
\end{align}
Therefore, for all $t\in\R^d$,
\begin{align} 
q_n(t) & \leq \frac{Z_n}{(2\pi)^{d/2}}\left(e^{\varepsilon-\frac{1}{2}\|t+Y_n\|^2}\mathds 1_{\|t\|\leq 4R}+e^{-\varepsilon-(2R-\varepsilon/(2R))\|t\|-t^\top Y_n-\|Y_n\|^2/2}\mathds 1_{\|t\|>4R}\right) \nonumber \\
& = \frac{Z_n}{(2\pi)^{d/2}}\left(A+B\right) \label{eqn:step1_1}
\end{align}
where, for the second term (corresponding to $\|t\|>2R$), we used \eqref{eqn:controlFn}.

We first control the $A$ term by plainly bounding the indicator by $1$, yielding that $A\leq e^{\varepsilon}e^{\frac{1}{2}\|t+Y_n\|^2}$. For the $B$ term, we use Cauchy-Schwarz inequality which yields that $-t^\top Y_n\leq R\|t\|$, so
\begin{equation*}
	B \leq e^{-\varepsilon-(R-\varepsilon/(2R))\|t\|}\mathds 1_{\|t\|>4R}.
\end{equation*}
Finally, integrating \eqref{eqn:step1_1} with respect to the Lebesgue measure on $\R^d$, we obtain
\begin{equation*}
	1 = \int_{\R^d}q_n(t)\diff t \leq \frac{Z_n}{(2\pi)^{d/2}}\left(e^\varepsilon (2\pi)^{d/2}+e^{-\varepsilon}\int_{\R^d\setminus B(0,4R)}e^{-(R-\varepsilon/(2R))\|t\|}\diff t\right).
\end{equation*}
Since the integral appearing on the right-hand side can be made arbitrarily small by choosing $R$ sufficiently large (recall that $R$ was defined from \eqref{eqn:tighness1} and could be taken arbitrarily large), we can choose $R$ such that the second integral is not larger than $(2\pi)^{d/2}\varepsilon$, yielding that $1\leq Z_n(e^{\varepsilon}+\varepsilon)$, that is, $Z_n\geq (e^{\varepsilon}+\varepsilon)^{-1}$. 

On the other hand, from \eqref{eqn:densityqn}, let us bound $q_n$ from below by first assuming that $n$ is large enough so $B(0,4R)\subseteq \sqrt n(\Theta-\theta^*)$ (this is possible because $\theta^*$ is in the interior of $\Theta$). This yields that $\mathds 1_{t\in\sqrt n(\Theta-\theta^*)}\geq \mathds 1_{\|t\|\leq 4R}$, so for all $t\in\R^d$,
\begin{equation*}
	q_n(t) \geq \frac{Z_n}{(2\pi)^{d/2}}e^{-\varepsilon-\frac{1}{2}\|t+Y_n\|^2} \mathds 1_{\|t\|\leq 4R} \geq \frac{Z_n}{(2\pi)^{d/2}}e^{-\varepsilon-\frac{1}{2}\|t+Y_n\|^2} \mathds 1_{\|t+Y_n\|\leq 3R} 
\end{equation*}
and hence, by integrating with respect to the Lebesgue measure, 
\begin{equation*}
	1 = \int_{\R^d}q_n(t)\diff t \geq \frac{Z_n}{(2\pi)^{d/2}}e^{-\varepsilon} \int_{B(0,3R)}e^{-\|u\|^2/2}\diff u.
\end{equation*}
Since the integral on the right-hand side goes to $(2\pi)^{d/2}$ as $R\to\infty$, let us choose $R$ sufficiently large so that integral is not smaller than $(2\pi)^{d/2}e^{-\varepsilon}$. This yields that $Z_n\leq e^{2\varepsilon}$. 

Finally, to recap, we have shown that, given any fixed $\varepsilon>0$, for sufficiently large $n$, $P((e^{\varepsilon}+\varepsilon)^{-1}\leq Z_n\leq e^{2\varepsilon})\geq 1-2\varepsilon$. Therefore, $Z_n\xrightarrow[n\to\infty]{} 1$ in probability, which is what we wanted to show. 

Now, for the last step of the proof, let us define $p_n:t\in\R^d\mapsto (2\pi)^{-d/2}e^{-\|t+Y_n\|^2/2}$ which is the density of $\mathcal N_d(-Y_n,I_d)$. Our goal is now to show that 
\begin{equation*}
	\int_{\R^d}|p_n(t)-q_n(t)|\diff t \xrightarrow[n\to\infty]{} 0
\end{equation*}
in probability. First, note that for all $x\in\R$, $|x|=2x_+-x$ where $x_+=\max(x,0)$. Applying this to $x=p_n(t)-q_n(t)$ for all $t\in\R^d$ in the above integral, and recalling that $p_n$ and $q_n$ are both densities, and hence have their integrals equal to $1$, we want to show that 
\begin{equation*} 
	\int_{\R^d}\left(1-Z_ne^{-(G_n(t)-\|t\|^2/2)}\mathds 1_{t\in\sqrt n(\Theta-\theta^*)}\right)_+p_n(t)\diff t \xrightarrow[n\to\infty]{} 0
\end{equation*}
in probability. Moreover, by subbaditivity of $x\in\R_+\mapsto x_+$, one can bound the above integral as follows 
\begin{align*} 
	\int_{\R^d} & \left(1-Z_ne^{-(G_n(t)-\|t\|^2/2)}\mathds 1_{t\in\sqrt n(\Theta-\theta^*)}\right)_+p_n(t)\diff t \\
	& \quad \quad = \int_{\R^d}\left(1-Z_n+Z_n-Z_ne^{-(G_n(t)-\|t\|^2/2)}\mathds 1_{t\in\sqrt n(\Theta-\theta^*)}\right)_+p_n(t)\diff t \\
	& \quad \quad \leq (1-Z_n)_+\int_{\R^d}p_n(t)\diff t+Z_n\int_{\R^d}\left(1-e^{-(G_n(t)-\|t\|^2/2)}\mathds 1_{t\in\sqrt n(\Theta-\theta^*)}\right)_+p_n(t)\diff t \\
	&  \quad \quad = (1-Z_n)_++Z_n\int_{\R^d}\left(1-e^{-(G_n(t)-\|t\|^2/2)}\mathds 1_{t\in\sqrt n(\Theta-\theta^*)}\right)_+p_n(t)\diff t.
\end{align*}
Since $Z_n\xrightarrow[n\to\infty]{}1$ in probability, it is therefore enough to show that 
\begin{equation} \label{eqn:goalTV}
	\int_{\R^d}\left(1-e^{-(G_n(t)-\|t\|^2/2)}\mathds 1_{t\in\sqrt n(\Theta-\theta^*)}\right)_+p_n(t)\diff t \xrightarrow[n\to\infty]{} 0
\end{equation}
in probability. Fix $\varepsilon>0$ and, as above, let $R$ satisfy \eqref{eqn:tighness1}. Assume again that $\|Y_n\|\leq R$ and that $\sup_{\|t\|\leq 4R}|G_n(t)-\|t\|^2/2|\leq\varepsilon$.  

First, decompose the integral in \eqref{eqn:goalTV} as the sum of one integral on $B(0,4R)$ and one integral on $\R^d\setminus B(0,4R)$. For the first one, we use the fact that the map $x\in\R\mapsto x_+$ is non-decreasing, so
\begin{align*}
	\int_{B(0,4R)} \left(1-e^{-(G_n(t)-\|t\|^2/2)}\mathds 1_{t\in\sqrt n(\Theta-\theta^*)}\right)_+p_n(t)\diff t & \leq \int_{\R^d}(1-e^{-\varepsilon}\mathds 1_{t\in\sqrt n(\Theta-\theta^*)})_+p_n(t)\diff t \\
	& = \int_{\R^d}(1-e^{-\varepsilon})_+p_n(t)\diff t = 1-e^{-\varepsilon}
\end{align*}
provided that $n$ is sufficiently large so that $B(0,4R)\subseteq \sqrt n(\Theta-\theta^*)$ for the last equality. For the second integral, we simply use the inequality $(1-x)_+\leq 1$ for all $x\geq 0$, which yields 
\begin{align*}
	\int_{\R^d\setminus B(0,4R)} & \left(1-e^{-(G_n(t)-\|t\|^2/2)}\mathds 1_{t\in\sqrt n(\Theta-\theta^*)}\right)_+p_n(t)\diff t \\
	& \quad\quad \leq \int_{\R^d\setminus B(0,4R)}p_n(t)\diff t \\
	& \quad\quad \leq (2\pi)^{-d/2}\int_{\R^d\setminus B(0,4R)}e^{-\frac{1}{2}(\|t\|^2-2R\|t\|)}\diff t \\
	& \quad\quad \leq (2\pi)^{-d/2}\int_{\R^d\setminus B(0,4R)}e^{-\frac{\|t\|^2}{4}}\diff t
\end{align*}
by Cauchy-Schwarz inequality, together with the fact that $\|Y_n\|\leq R$.
Since the last integral goes to $0$ as $R\to\infty$, we can choose $R$ sufficiently large so that the right-hand side of the last display is not larger than $\varepsilon$. 
Finally, assuming that $n$ is large enough (so $B(0,4R)\subseteq \sqrt n(\Theta-\theta^*)$), we have shown that on an event that holds with probability at least $1-2\varepsilon$ by the union bound, the integral from \eqref{eqn:goalTV} is bounded from above by $1-e^{-\varepsilon}+\varepsilon$. This concludes the proof.

\end{proof}

\section{Nearly misspecified case} \label{sec:case2}

Here, we assume that $\theta^*$ is on the boundary of $\Theta$ and that $\nabla\Phi(\theta^*)=0$. In this case, the constrained and unconstrained minimizers of $\Phi$ still coincide but the analysis of the posterior distribution requires further arguments. We still consider the following change of variable: $t=\sqrt n(\theta-\theta^*)$, whose posterior distribution $Q_n$ has a density $q_n$ with respect to the Lebesgue measure, which can be written as
$$q_n(t)=Z_nA_ne^{-G_n(t)-t^\top Y_n-\|Y_n\|^2/2}\mathds 1_{t\in\sqrt n(\Theta-\theta^*)}, \quad \forall t\in\R^d$$
where
\begin{itemize}
	\item $A_n$ is the (random) normalizing constant satisfying $A_n\int_{T}e^{-\frac{\|t+Y_n\|^2}{2}}\diff t=1$ -- recall that $T$ is the tangent cone of $\Theta$ at $\theta^*$ and, since $\Theta$ is assumed to have non-empty interior, $T$ also has non-empty interior;
	\item $Z_n$ is the (random) normalizing constant ensuring that $\int_{\R^d}q_n(t)\diff t=1$.
\end{itemize}

In the sequel, we denote by $T_n=\sqrt n(\Theta-\theta^*)$, so the sequence $(T_n)_{n\geq 1}$ is non-decreasing and satisfies $\bigcup_{n\geq 1}T_n=T$. 

The main result of this section is the following theorem. Given $a\in\R^d$ and a positive definite, symmetric matrix $A\in\R^{d\times d}$, we denote by $\mathcal N_d^T(a,A)$ the Gaussian distribution with mean $a$ and covariance matrix $A$, conditionned to stay in $T$. That is, its density with respect to the Lebesgue measure is proportional to $t\in\R^d\mapsto e^{-\frac{\|\Sigma^{-1/2}(t-a)\|^2}{2}}\mathds 1_{t\in T}$. 

\begin{theorem} \label{thm:main_case2}
Let Assumption~\ref{ass:subgradients_integr} hold. Assume that $\theta^*$ is on the boundary of $\Theta$ and that $\nabla\Phi(\theta^*)=0$. Then, 
	$$\TV(Q_n,\mathcal N_d^T(-\nabla^2\Phi(\theta^*)^{-1}Y_n,\nabla^2\Phi(\theta^*)^{-1}))\xrightarrow[n\to\infty]{}0$$
in probability.
\end{theorem}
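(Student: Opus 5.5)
We follow the proof of Theorem~\ref{thm:main_case1} and again reduce to $\nabla^2\Phi(\theta^*)=I_d$. Lemma~\ref{lem:UnifConv} still applies: it only uses convexity of $\Phi_n$ on the open set $\Theta_0\ni\theta^*$ and $\nabla\Phi(\theta^*)=0$, not interiority of $\theta^*$ in $\Theta$. The target density is $p_n(t)=A_ne^{-\|t+Y_n\|^2/2}\mathds 1_{t\in T}$. Two things are new compared with the well-specified case. First, the indicator $\mathds 1_{t\in T_n}$ does not become $1$ on $B(0,4R)$; it only increases to $\mathds 1_{t\in T}$. Second, $A_n$ is random, and we need it bounded in probability.

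The first step handles $A_n$. Since $T$ has non-empty interior, it contains a ball $B(u_0,r)$. By scaling the cone, it then contains $B(\lambda u_0,\lambda r)$ for every $\lambda>0$. On $\{\|Y_n\|\leq R\}$ we therefore get $A_n^{-1}\geq \int_{B(u_0,r)}e^{-(\|t\|+R)^2/2}\diff t=:c_R>0$. We also have $A_n^{-1}\leq (2\pi)^{d/2}$, so $A_n$ is tight and bounded away from $0$.

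The second step shows $Z_n\to1$ in probability, and the upper bound works as before. On $\{\|Y_n\|\leq R\}\cap\{\sup_{B(0,4R)}|G_n-\|t\|^2/2|\leq\varepsilon\}$, convexity gives $G_n(t)\geq (2R-\varepsilon/(2R))\|t\|$ outside $B(0,4R)$. Using $T_n\subseteq T$, we obtain
$$1\leq Z_nA_n\Big(e^{\varepsilon}A_n^{-1}+\int_{\|t\|>4R}e^{-(R-\varepsilon/(2R))\|t\|}\diff t\Big).$$
The tail integral can be made smaller than $\varepsilon c_R$ by enlarging $R$, because it decays like $e^{-cR^2}$ while $c_R$ is at least about $e^{-cR^2}$ with a smaller constant. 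Checking this competition carefully is where we must be careful. A cleaner route is to replace $4R$ by $MR$ with $M$ large, which makes the tail decay like $e^{-M R^2/2}$.

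For the lower bound, we need $\int_{T_n\cap B(0,4R)}e^{-\|t+Y_n\|^2/2}\diff t$ to be close to $\int_{T\cap B(0,4R)}e^{-\|t+Y_n\|^2/2}\diff t$ uniformly in $\|Y_n\|\leq R$. The sets $T_n\cap B(0,4R)$ increase to $T\cap B(0,4R)$ up to a boundary. For a convex cone with non-empty interior this boundary is Lebesgue-null, since $\mathrm{cl}(T)\setminus T\subseteq\partial T$. So $\mathrm{Leb}\big((T\setminus T_n)\cap B(0,4R)\big)\to0$ deterministically by monotone convergence. This gives $Z_nA_ne^{-\varepsilon}(A_n^{-1}-\eta_n-\delta_R)\leq 1$, where $\eta_n\to0$ and $\delta_R$ is the Gaussian mass of $T$ outside $B(0,4R)$. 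Dividing by $A_n^{-1}\geq c_R$ yields $Z_n\leq e^{2\varepsilon}$ for large $n$.

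The last step is the total variation bound, which we prove exactly as \eqref{eqn:goalTV}. We write $\int(1-Z_ne^{-(G_n-\|t\|^2/2)}\mathds 1_{T_n})_+p_n$ and split it three ways. On $B(0,4R)\cap T_n$ the integrand is at most $1-e^{-\varepsilon}$. On $B(0,4R)\cap(T\setminus T_n)$ the contribution is at most $A_n\,\mathrm{Leb}((T\setminus T_n)\cap B(0,4R))\to0$. Outside $B(0,4R)$ we use a Gaussian tail, weighted by $A_n$, which is tight. The main obstacle is the uniform comparison between the tail mass and the mass $A_n^{-1}$ of the truncated Gaussian. $T$ may be a very thin cone, so we need $R$ to be chosen relative to $c_R$, and the freedom in the radius multiplier $M$ is what we plan to exploit there.
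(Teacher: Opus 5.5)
Your proposal is correct and follows essentially the paper's route. You reduce to $\nabla^2\Phi(\theta^*)=I_d$, show $(A_n)$ is tight, prove $Z_n\to1$ using Lemma~\ref{lem:UnifConv} on a ball and the convexity lower bound outside it, absorb $T\setminus T_n$ by monotone convergence, and conclude in total variation. The obstacle you flag is handled in the paper by exactly the decoupling you suggest, and your direct comparison of the $e^{-4R^2}$-type tails with $c_R\ge \mathrm{Leb}(B(u_0,r))\,e^{-(R+\|u_0\|+r)^2/2}$ also works. Concretely, the paper picks the ball radius $R'$ only after the tightness radius $R$. The bound $A_n\le e^{R^2}\bigl(\int_T e^{-\|t\|^2}\diff t\bigr)^{-1}$ is therefore fixed before the tail is made small, and the conclusion is just tight times $o_P(1)$.
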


\begin{remark}
	Note that in this case, it is no longer true that $\sqrt n(\hat \theta_n-\theta^*)+\nabla^2\Phi(\theta^*)^{-1}Y_n\xrightarrow[n\to\infty]{} 0$ in probability, where $\hat \theta_n$ is the constrained maximum likelihood estimator, as it was the case in Section~\ref{sec:case1} above, where the constrained and unconstrained maximum likelihood estimators were almost surely equal for sufficiently large $n$. Indeed, it follows from the proof of Theorem~7 in \cite{brunel2025asymptotics} that $\sqrt n(\hat \theta_n-\theta^*)-\pi_{T_n}^{\nabla^2\Phi(\theta^*)}(-Y_n)\xrightarrow[n\to\infty]{}0$ in probability, where $\pi_{T_n}^{\nabla^2\Phi(\theta^*)}$ is the metric projection onto $T_n$, with respect to the Euclidean structure defined by $\nabla^2\Phi(\theta^*)$. 
\end{remark}

\begin{proof}
For simplicity, we assume that $\nabla^2\Phi(\theta^*)=I_d$ again. As in the proof of Theorem~\ref{thm:main_case1}, the first step consists in showing that $Z_n\xrightarrow[n\to\infty]{}1$ in probability. To achieve this objective, it is necessary and sufficient to show that 
$$A_n\int_{\R^d}e^{-G_n(t)-t^\top Y_n-\|Y_n\|^2/2}\mathds 1_{t\in T_n}\diff t\xrightarrow[n\to\infty]{} 1$$
in probability, since $Z_n$ is the inverse of that quantity. Equivalently, let us show that 
$$A_n\left(\int_{\R^d}e^{-G_n(t)-t^\top Y_n-\|Y_n\|^2/2}\mathds 1_{t\in T_n}\diff t-\int_{\R^d}e^{-\frac{\|t+Y_n\|^2}{2}}\mathds 1_{t\in T}\diff t\right)\xrightarrow[n\to\infty]{}0$$
in probability. We proceed in two steps: First, we show that the sequence $(A_n)_{n\geq 1}$ is tight, and then, that the quantity inside the brackets in the previous display converges to $0$ in probability.

Fix $\varepsilon>0$ and $R>0$ such that $P(\|Y_n\|\leq R)\geq 1-\varepsilon$ for all $n\geq 1$, as in the previous section. Then, provided that $\|Y_n\|\leq R$, one can write the following inequalities:
\begin{equation*}
	1 = A_n\int_{T}e^{-\frac{\|t+Y_n\|^2}{2}}\diff t \geq A_n\int_T e^{-\|t\|^2-\|Y_n\|^2}\diff t \geq e^{-R^2} \int_T e^{-\|t\|^2}\diff t,
\end{equation*}
	yielding that $A_n\leq e^{R^2}\left(\int_T e^{-\|t\|^2}\diff t\right)^{-1}$. 
This shows tightness of $(A_n)_{n\geq 1}$. 

Now, let us show that 
\begin{equation} \label{eqn:goal2_1}
	\int_{\R^d}e^{-G_n(t)-t^\top Y_n-\|Y_n\|^2/2}\mathds 1_{t\in T_n}\diff t-\int_{\R^d}e^{-\frac{\|t+Y_n\|^2}{2}}\mathds 1_{t\in T}\diff t \xrightarrow[n\to\infty]{}0
\end{equation}
in probability.
	
Again, assume that $\|Y_n\|\leq R$. 
Since $T_n\subseteq T$ for all $n\geq 1$,
\begin{align*}
	\left|\int_{\R^d}e^{-\frac{\|t+Y_n\|^2}{2}}\mathds 1_{t\in T}\diff t-\int_{\R^d}e^{-\frac{\|t+Y_n\|^2}{2}}\mathds 1_{t\in T_n}\diff t \right| & = \int_{\R^d}e^{-\frac{\|t+Y_n\|^2}{2}}(\mathds 1_{t\in T}-\mathds 1_{t\in T_n})\diff t \\
	& \leq \int_{\R^d}e^{-\frac{\|t\|^2-2R\|t\|^2}{2}}(\mathds 1_{t\in T}-\mathds 1_{t\in T_n})\diff t
\end{align*}
where the last integral goes to $0$ as $n\to\infty$, by dominated convergence. This shows that the difference between the two integrals in the previous display converges to $0$ in probability, and \eqref{eqn:goal2_1} amounts in proving that 
\begin{equation} \label{eqn:goal3_1}
	\int_{T_n}\left(e^{-G_n(t)-t^\top Y_n-\|Y_n\|^2/2}-e^{-\frac{\|t+Y_n\|^2}{2}}\right)\diff t \xrightarrow[n\to\infty]{}0
\end{equation}
in probability.

Fix $R'>0$ and assume that $\sup_{\|t\|\leq R'}|G_n(t)-\|t\|^2/2|\leq \varepsilon$. By Lemma~\ref{lem:UnifConv}, this occurs with probability at least $1-\varepsilon$ if $n$ is chosen sufficiently large. Also assume that $\|Y_n\|\leq R$. Decompose the above integral as the sum of one integral $I_n$ on $T_n\cap B(0,R')$ and one integral $J_n$ on $T_n\setminus B(0,R')$. We bound $I_n$ and $J_n$ separately. First, write $I_n$ as 
\begin{equation*}
	I_n = \int_{B(0,R')} e^{-\|t+Y_n\|^2/2}\left(e^{-(G_n(t)-\|t\|^2/2)}-1\right)\mathds 1_{t\in T_n}\diff t
\end{equation*}
so, by the triangle inequality,
\begin{align*}
	|I_n| & \leq \int_{B(0,R')} e^{-\|t+Y_n\|^2/2}\left|e^{-(G_n(t)-\|t\|^2/2)}-1\right|\mathds 1_{t\in T_n}\diff t \\
	& \leq (e^\varepsilon-1)\int_{B(0,R')}e^{-\|t+Y_n\|^2/2}\mathds 1_{t\in T_n}\diff t \leq (e^\varepsilon-1)\int_{\R^d}e^{-\|t+Y_n\|^2/2}\diff t \\
	& = (e^\varepsilon-1)\int_{\R^d}e^{-\|t\|^2/2}\diff t = (2\pi)^{d/2}(e^\varepsilon-1)
\end{align*}
where we simply bounded $\mathds 1_{t\in T_n}$ by $1$ in the third inequality and performed a change of variable in the first equality. Second, for $J_n$, use \eqref{eqn:controlFn} (with $R'$ instead of $4R$) and use the triangle inequality to write
\begin{align*}
	|J_n| & \leq \int_{\R^d\setminus B(0,R')}e^{-\|t+Y_n\|^2/2}\left(e^{-(G_n(t)-\|t\|^2/2)}+1\right)\diff t \\
	& = \int_{\R^d\setminus B(0,R')}e^{-\|t+Y_n\|^2/2}\diff t + \int_{\R^d\setminus B(0,R')}e^{-G_n(t)-t^\top Y_n-\|Y_n\|^2/2}\diff t \\
	& \leq \int_{\R^d\setminus B(0,R')}e^{-\|t+Y_n\|^2/2}\diff t + \int_{\R^d\setminus B(0,R')}e^{-(R'/2-2\varepsilon/R')\|t\|-\varepsilon-t^\top Y_n-\|Y_n\|^2/2}\diff t \\
	& \leq \int_{\R^d\setminus B(0,R'-R)}e^{-\|u\|^2/2}\diff u + \int_{\R^d\setminus B(0,R')}e^{-(R'/2-2\varepsilon/R'-R)\|t\|-\varepsilon}\diff t
\end{align*}
where, for the first integral, we performed the change of variable $u=t+Y_n$, whose norm must be at least $R'-R$ and, for the second integral, we used Cauchy-Schwarz inequality. Since both these integrals go to $0$ as $R'\to\infty$, one can choose $R'$ such that $|J_n|\leq \varepsilon$. 

To recap, we proved that if $n$ is sufficiently large, with probability at least $1-2\varepsilon$ (that is, when $\|Y_n\|\leq R$ and $\sup_{t\in B(0,R')}|G_n(t)-\|t\|^2/2|\leq\varepsilon$), the integral in \eqref{eqn:goal3_1} is bounded by $(2\pi)^{d/2}(e^\varepsilon-1)+\varepsilon$ in absolute value. This proves \eqref{eqn:goal3_1}. Hence, we have obtained that $Z_n\xrightarrow[n\to\infty]{}1$ in probability. 

Now, we are ready to prove Theorem~\ref{thm:main_case1}, that is. Now that we have established that $Z_n\xrightarrow[n\to\infty]{} 1$ in probability, this amounts in showing that 
$$\int_{\R^d}\left|A_ne^{-G_n(t)-t^\top Y_n-\|Y_n\|^2/2}\mathds 1_{t\in T_n}-A_ne^{-\frac{\|t+Y_n\|^2}{2}}\mathds 1_{t\in T}\right|\diff t \xrightarrow[n\to\infty]{} 0$$
in probability. Moreover, since we have also shown that the sequence $(A_n)_{n\geq 1}$ is tight, we simply need to show that 
$$\int_{\R^d}\left|e^{-G_n(t)-t^\top Y_n-\|Y_n\|^2/2}\mathds 1_{t\in T_n}-e^{-\frac{\|t+Y_n\|^2}{2}}\mathds 1_{t\in T}\right|\diff t \xrightarrow[n\to\infty]{} 0$$
in probability, which follows directly from the previous computations. 
\end{proof}

\section{Misspecified case} \label{sec:case3}

Now, let us assume that $\theta^*$ is a boundary point and that $u:=\nabla\Phi(\theta^*)\neq 0$. For simplicity, we assume that $\theta^*=0$ and that $\nabla^2\Phi(\theta^*)=I_d$. This comes with no loss of generality, since one could simply perform a linear reparameterization of $\theta$. The main theorem of this section is stated in the general case (where $\nabla^2\Phi(\theta^*)$ is not necessarily the identity matrix) in Section~\ref{sec:case3_general} in the appendix (see Theorem~\ref{thm:case3_general}).

In this section, we assume that $\Theta$ can be written as a set of smooth and convex constraints:
$$\Theta=\{\theta\in\R^d:g_j(\theta)\leq 0, \forall j=1,\ldots,p\}$$
where $p\geq 1$ is an integer and $g_1,\ldots,g_p:\R^d\to\R$ are twice differentiable, convex functions. We let $J=\{j=1,\ldots,p:g_j(0)=0\}$ be the set of active constraints at $0$. In the following, we denote by $u_j=\nabla g_j(0)$ for all $j\in J$.

Then, the normal cone $N$ to $\Theta$ at $0$ is the conic hull of the $u_j$'s, $j\in J$, that is, $N=\{\sum_{j\in J}\lambda_j u_j:\lambda_j\geq 0,\forall j\in J\}$. 
The support cone $C$ of $\Theta$ at $0$ is the polar cone of $N$, that is, 
$$C=\{\theta\in \R^d:u_j^\top\theta\leq 0, \forall j\in J\}.$$
This is a convex polyhedron since it is defined by finitely many linear constraints. Note that some of these constraints may be redundant. In the sequel, we denote by $\tilde J$ the collection of all such $j\in J$ for which $C\cap u_j^\perp$ is a facet (i.e., a $(d-1)$-dimensional face) of $C$. 

By the first order condition, since $0$ is a minimizer of $\Phi$ on $\Theta$, $-u\in N$. Therefore, $C\cap u^\perp$ must be a face of $C$ (of any dimension between $0$ and $d-1$). That is, it is the intersection of finitely many facets of $C$. We denote by $J^*$ the collection of all $j\in \tilde J$ such that the facet $C\cap u_j^\perp$ contains the face $C\cap u^\perp$. That is, $-u$ is in the relative interior of the cone spanned by the $u_j$'s, $j\in J^*$, i.e., $-u=\sum_{j\in J^*}\lambda_j u_j$ for some positive coefficients $\lambda_j>0, j\in J^*$.

Now, we let $L=\textrm{span}(C\cap u^\top)=\{t\in \R^d:u_j^\top t=0, \forall j\in J^*\}$ be the linear hull of the face $C\cap u^\top$ of $C$. Its orthogonal space is given by $L^\perp=\textrm{span}(\{u_j:j\in J^*\})$. 

Then, we can rewrite $C\cap u^\top$ as
\begin{equation} \label{eqn:cone}
	C\cap u^\top = \{t\in L: u_j^\top t\leq 0, \forall j\in \tilde J, u_j^\top t=0, \forall j\in J^*\}.
\end{equation}
In particular, its relative interior is given by 
\begin{align} 
	\relint(C\cap u^\top) & = \{t\in L: u_j^\top t< 0, \forall j\in \tilde J\setminus J^*, u_j^\top t=0, \forall j\in J^*\} \nonumber \\
	& = \{t\in C\cap u^\perp:J(t)=J^*\} \label{eqn:relint_cone}
\end{align}
where, for all $t\in L$, we denote by $J(t)=\{j\in \tilde J:u_j^\top t=0\}$.

For $n\geq 1$, we let $C_2\pn=\{(t,s)\in L\times L^\perp:t/\sqrt n+s/n\in\Theta\}$ and we define the second order tangent set of $\Theta$ at $\theta^*=0$ (relative to $u$) as 
\begin{align*}
	C_2 & = \{(t,s)\in L\times L^\perp:u_j^\top t\leq 0, \forall j\in J, \frac{1}{2}t^\top\nabla^2 g_j(0)t+u_j^\top s\leq 0, \forall j\in J(t)\} \\
	& = \{(t,s)\in (C\cap u^\perp)\times L^\perp : s\in C_2(t)\}
\end{align*}
where $C_2(t)=\{s\in L^\perp: \frac{1}{2}t^\top\nabla^2 g_j(0)t+u_j^\top s\leq 0, \forall j\in J(t)\}$. We refer to \cite[Chapter 3]{bonnans2013perturbation} for related notions of tangent sets. Importantly, the set $C_2$ can be seen as the limiting set of the sequence of sets $C_2\pn, n\geq 1$, see Lemma~\ref{lemma:conv_sets} below (a precise statement could be made about the Painlevé-Kuratowsky convergence of $C_2\pn$ to $C_2$ but Lemma~\ref{lemma:conv_sets} will be sufficient to serve our purposes).

For convenience, we also let  $C_{2,0}\pn=\{(t,s)\in L\times L^\perp:t/\sqrt n+s/n\in\Theta_0\}$. For all $(t,s)\in C_{2,0}\pn$, denote by $G_n(t,s)=n(\Phi_n(t/\sqrt n+s/n)-\Phi_n(0))$ (recall that we have assumed that $\theta^*=0$). Since $\Theta_0$ is open and contains $0$, for any fixed $R>0$, $B_R:=\{(t,s)\in L\times L^\perp:\|t\|+\|s\|\leq R\}\subseteq C_{2,0}\pn$ for all sufficiently large $n$.

Now, we decompose every $\theta\in\Theta$ as $\theta=\frac{t}{\sqrt n}+\frac{s}{n}$ for some (uniquely defined) $(t,s)\in C_2\pn$, and we write the joint posterior density of $(t,s)$ as 
$$q_n(t,s)=\tilde Z_ne^{-n\left(\Phi_n(t/\sqrt n+s/n)-\Phi_n(0)\right)}\mathds 1_{(t,s)\in C_2\pn},$$
for all $(t,s)\in L\times L^\perp$, where $\tilde Z_n$ is a random, positive, normalizing constant. We denote by $Q_n$ the (random) distribution on $L\times L^\perp$ with density $q_n$ with respect to the Lebesgue measure of $L\times L^\perp$.\footnote{The Lebesgue measure of $L\times L^\perp$ is the product of the Lebesgue measures of $L$ and $L^\perp$.}

Denote by $R_n$ the probability measure on $L\times L^\perp$ whose density with respect to the Lebesgue measure is given by $r_n:(t,s)\in L\times L^\perp \mapsto A_ne^{-t^\top Y_n-\frac{\|t\|^2}{2}-s^\top u}\mathds 1_{(t,s)\in C_2}$, where $A_n>0$ is a (random) positive, normalizing constant satisfying 
\begin{equation} \label{eqn:norm_A}
	A_n\int_{L\times L^\perp}e^{-t^\top Y_n-\frac{\|t\|^2}{2}-s^\top u}\mathds 1_{(t,s)\in C_2}\diff t\diff s=1.
\end{equation}
Lemmas~\ref{lemma:bound_int} and~\ref{lemma:C2_convex} show, respectively, that the integral appearing in \eqref{eqn:norm_A} is almost surely finite and non-zero, yielding that $A_n$ is well defined. 
Then, we have the following theorem.

\begin{theorem} \label{THMCASE3}
Let Assumption~\ref{ass:subgradients_integr} hold. Assume that $\theta^*$ is on the boundary of $\Theta$ and that $\nabla\Phi(\theta^*)\neq 0$. Then, $\TV(Q_n,R_n)\xrightarrow[n\to\infty]{}0$ in probability.
\end{theorem}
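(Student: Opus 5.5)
The plan is to follow the scheme of the proofs of Theorems~\ref{thm:main_case1} and~\ref{thm:main_case2}. I will first establish a local uniform expansion of $G_n(t,s)$, then use convexity to control the tails, and finally compare the two unnormalized densities in $L^1$; convergence of the normalizing constants and of the total variation distance then follow.

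\textbf{Step 1 (local expansion).} I will write $\theta=t/\sqrt n+s/n$ and decompose $\Phi_n(\theta)-\Phi_n(0)$ as $[\Phi_n(\theta)-\Phi_n(0)-(\Phi(\theta)-\Phi(0))]+[\Phi(\theta)-\Phi(0)]$. The second bracket, by the $C^2$ assumption and $\nabla^2\Phi(0)=I_d$, equals $u^\top t/\sqrt n+u^\top s/n+\|t\|^2/(2n)+o(1/n)$ uniformly on $B_R$. The first bracket, multiplied by $n$, is handled as in Lemma~\ref{lem:UnifConv}: the subgradients $U_i$ give $n(\cdot)=(t/\sqrt n+s/n)^\top\sum_i(U_i-u)+o_P(1)$ uniformly on compacts, by the same convexity argument as in \cite[Theorem 7]{brunel2025asymptotics}. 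Writing $Y_n=n^{-1/2}\sum_i(U_i-u)$ here (this is the centered version, which is what $R_n$ should contain), the term $s^\top\sum_i(U_i-u)/n$ is $o_P(1)$. This gives
\[
\sup_{(t,s)\in B_R}\left|G_n(t,s)-\sqrt n\,u^\top t-t^\top Y_n-\tfrac{\|t\|^2}{2}-u^\top s\right|\xrightarrow[n\to\infty]{}0
\]
in probability. For $t\in L$ we have $u^\top t=0$, since $-u\in\mathrm{span}\{u_j:j\in J^*\}=L^\perp$. The diverging term therefore vanishes, and the limit of the exponent is exactly the exponent of $r_n$.

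\textbf{Step 2 (geometry of $C_2\pn$).} I need the convergence $\mathds 1_{C_2\pn}\to\mathds 1_{C_2}$ Lebesgue-a.e. This is presumably Lemma~\ref{lemma:conv_sets}; I would prove it by Taylor expanding $g_j(t/\sqrt n+s/n)$ for $j\in J$. For $j\in J(t)$ the leading term is $\tfrac1n(\tfrac12 t^\top\nabla^2g_j(0)t+u_j^\top s)$. For $j\in J\setminus J(t)$ the term $u_j^\top t/\sqrt n<0$ dominates. Inactive constraints are irrelevant near $0$. This leaves only boundary sets of measure zero undetermined. I will also use Lemmas~\ref{lemma:bound_int} and~\ref{lemma:C2_convex}, so that $A_n$ is well defined and tight: it is bounded on $\{\|Y_n\|\le R\}$ because $C_2$ has nonempty interior.

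\textbf{Step 3 (tails, the main obstacle).} The difficulty is that the integrand is Gaussian in $t$ but only exponential in $s$, and $C_2\pn$ is not a cone. Moreover, the limit of $G_n$ is not coercive on all of $L\times L^\perp$: in the direction $s$ with $u^\top s<0$ it decreases. Coercivity comes only from the constraint, since $C_2\subseteq\{u^\top s\ge -c\|t\|^2\}$ (because $-u=\sum_{J^*}\lambda_ju_j$). My plan is to apply convexity of $H_n:=G_n-t^\top Y_n$ on the convex set $C_2\pn\cap C_{2,0}\pn$ from the origin, as in \eqref{eqn:controlFn}. The local limit $h(t,s)=\|t\|^2/2+u^\top s$ satisfies $h\ge c>0$ on $C_2\cap\partial B_R$, and I expect this to follow from $u^\top s\ge-\tfrac12\sum\lambda_j t^\top\nabla^2g_j(0)t$ together with positivity of $h$ away from $0$ on the compact slice. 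The delicate point is extending this strict positivity to the sets $C_2\pn\cap\partial B_R$ for large $n$, uniformly, which needs a version of Step 2 uniform on compacts. Convexity then gives $H_n\ge (c/R)\,(\|t\|+\|s\|)$ outside $B_R$ on $C_2\pn$. This makes both tail integrals, $\int_{C_2\pn\setminus B_R}e^{-H_n}$ and the analogue for $r_n$, small for large $R$.

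\textbf{Step 4 (conclusion).} On $B_R$ I combine Steps 1 and 2 with dominated convergence to get $\int_{B_R}|e^{-H_n}\mathds 1_{C_2\pn}-e^{-h}\mathds 1_{C_2}|\to0$ in probability, working on the event $\|Y_n\|\le R$. Together with Step 3 this yields $\tilde Z_n/A_n\to1$ in probability, and then $\TV(Q_n,R_n)\to0$ in probability, exactly as in the last part of the proof of Theorem~\ref{thm:main_case2}.
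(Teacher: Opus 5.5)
Your overall architecture matches the paper's. Step~1 is what the paper imports from \cite[Lemma 4]{brunel2025asymptotics]}; your remark that $Y_n$ must be centred is correct, and harmless because $t^\top Y_n$ is unchanged for $t\in L\subseteq u^\perp$. Step~2 is Lemma~\ref{lemma:conv_sets}, and Step~4 is the same dominated-convergence argument on $B_R$.

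The gap is in Step~3. The tail you must control is that of $e^{-G_n}=e^{-H_n-t^\top Y_n}$ on $C_2\pn\setminus B_R$, not that of $e^{-H_n}$. On $\{\|Y_n\|\le r\}$ the tilt $e^{-t^\top Y_n}$ is only bounded by $e^{r\|t\|}$, and $r$ must grow as $\varepsilon\to 0$. Your isotropic bound $H_n\ge (c/R)(\|t\|+\|s\|)$, with $c=\inf_{C_2\cap\partial B_R}h$, cannot absorb this tilt. Indeed, $C_2$ contains half-lines $\{(t_0,s_0+\lambda s'):\lambda\ge 0\}$ with $s'\neq 0$: take $t_0\in\relint(C\cap u^\perp)$ and $s'$ in the cone $\{s\in L^\perp:u_j^\top s\le 0,\ j\in J^*\}$. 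Along such a half-line, $h\le \|t_0\|^2/2+\|u\|\|s\|$ grows only linearly, so $c/R\le\|u\|+o(1)$ stays bounded as $R\to\infty$. Once $r$ exceeds this constant, $e^{-(c/R)\|z\|+r\|t\|}$ is not even integrable over $t\in L$.

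What is missing is to keep the anisotropic form of the radial extrapolation, as the paper does. For $z=(t,s)\in C_2\pn\setminus B_R$, convexity from $0$ (using $G_n(0)=0$) together with the local expansion at $Rz/\|z\|$ gives $G_n(z)\ge \frac{R\|t\|^2}{2\|z\|}+u^\top s-r\|t\|-\frac{\varepsilon\|z\|}{R}$. The $t$-part has rate of order $R$ and beats $r\|t\|$, while the $s$-part is handled by $u^\top s\ge\alpha\|s\|$. For $R\gtrsim\max(r,r^2/\alpha)$ this yields $G_n(z)\ge\kappa\|z\|$ with $\kappa>0$ independent of $R$ and $n$. That makes the tail small uniformly in $n$.

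Your justification of the positivity of $h$ also has the sign wrong. Since $\nabla^2 g_j(0)\succeq 0$ and $-u=\sum_{J^*}\lambda_ju_j$, on $C_2$ one has $u^\top s\ge +\tfrac12\sum_{J^*}\lambda_j t^\top\nabla^2 g_j(0)t\ge 0$. In fact $u^\top s\ge\alpha\|s\|$ there (Lemma~\ref{lemma:bound_uTs_finite}). The minus-sign version you state does not imply $h>0$, so that step is circular as written.

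More to the point, $u^\top s\ge\alpha\|s\|$ holds \emph{exactly} on $C_2\pn$ for every $n$. For $t\in L$ one has $u_j^\top t=0$ for $j\in J^*$. Convexity of $g_j$ gives $\Theta\subseteq\{\theta:u_j^\top\theta\le 0,\ j\in J^*\}$, so $u_j^\top s\le 0$ for $j\in J^*$ (Lemma~\ref{lemma:bound_uTs}). This scale-invariant cone inclusion feeds the lower bound above. It also makes your ``delicate point'', a uniform-on-compacts version of Step~2, unnecessary.
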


\begin{proof}

First, write 
$$q_n(t,s)=Z_ne^{-\left(G_n(t,s)-t^\top Y_n-\frac{\|t\|^2}{2}-s^\top u\right)}A_ne^{-t^\top Y_n-\frac{1}{2}\|t\|^2-s^\top u}\mathds 1_{(t,s)\in C_2\pn},$$
for all $(t,s)\in L\times L^\perp$, where $Z_n$ is simply $\tilde Z_n/A_n$.

The strategy of the proof is similar as in the previous sections. First, we will show that $Z_n\xrightarrow[n\to\infty]{}1$ in probability, by showing that 
\begin{equation} \label{eqn:goal313}
	A_n\int_{L\times L^\perp} e^{-\left(G_n(t,s)-t^\top Y_n-\frac{\|t\|^2}{2}-s^\top u\right)}e^{-t^\top Y_n-\frac{\|t\|^2}{2}-s^\top u}\mathds 1_{(t,s)\in C_2\pn}\diff t\diff s \xrightarrow[n\to\infty]{} 1
\end{equation}
in probability. By a similar argument as in the proofs of Theorems\ref{thm:main_case1} and \ref{thm:main_case2}, the sequence $(A_n)_{n\geq 1}$ is tight, so in order to show \eqref{eqn:goal313}, it is sufficient to show that 
\begin{align} 
	& \int_{L\times L^\perp} e^{-t^\top Y_n-\frac{\|t\|^2}{2}-s^\top u}\left(e^{-\left(G_n(t,s)-t^\top Y_n-\frac{\|t\|^2}{2}-s^\top u\right)}\mathds 1_{(t,s)\in C_2\pn}-\mathds 1_{t\in C\cap u^\perp,s\in C_2(t)}\right)\diff t\diff s \nonumber \\
	& \hspace{12cm} \xrightarrow[n\to\infty]{} 0 \label{eqn:goal_case3}
\end{align}
in probability.

Fix $\varepsilon>0$. Since $(Y_n)_{n\geq 1}$ is tight by the central limit theorem, one can choose $r>0$ such that $P(\|Y_n\|\leq r)\geq 1-\varepsilon$ for all $n\geq 1$. Now, fix $R>0$. By \cite[Lemma 4]{brunel2025asymptotics}, we have that 
\begin{equation}
	\sup_{(t,s)\in B_R} \left|G_n(t,s)-t^\top Y_n-\frac{\|t\|^2}{2}-s^\top u\right| \xrightarrow[n\to\infty]{} 0
\end{equation}
in probability.
Assume that $\|Y_n\|\leq r$ and that $\DS \sup_{(t,s)\in B_R} \left|G_n(t,s)-t^\top Y_n-\frac{\|t\|^2}{2}-s^\top u\right|\leq \varepsilon$. These two events hold simultaneously with probability at least $1-2\varepsilon$, if $n$ is sufficiently large. 
Note that since $G_n$ is convex, we must have (by the same argument already used in previous proofs) that for all $(t,s)\notin B_R$, if we denote by $z=(t,s)$ and $\|z\|=\|t\|+\|s\|$, 
\begin{align}
	G_n(z) & \geq t^\top Y_n+\frac{R\|t\|^2}{2\|z\|}+s^\top u-\frac{\|z\|-R-1}{R}\varepsilon \nonumber \\
	& \geq -r\|t\| +\frac{R\|t\|^2}{2\|z\|}+s^\top u-\frac{\|z\|\varepsilon}{R}
\end{align}
(this inequality is obtained by writing $Rz/\|z\|$ as a convex combination of $0$ and $z$).
Moreover, by Lemmas~\ref{lemma:bound_uTs_finite} and~\ref{lemma:bound_uTs}, if $(t,s)\in C_2\pn$ or if $t\in C\cap u^\perp$ and $s\in C_2(t)$, it holds that $u^\top s\geq \alpha>0$ where $\alpha$ is a positive constant. Therefore, for all such $z=(t,s)\notin B_R$, we obtain
\begin{align*}
	G_n(z) & \geq -r\|t\| +\frac{R\|t\|^2}{2(\|t\|+\|s\|)}+\alpha\|s\|-\frac{\varepsilon}{R}(\|t\|+\|s\|) \\
	& = \frac{1}{\|t\|+\|s\|}\left((R/2-r-\varepsilon/R)\|t\|^2+(\alpha-r-2\varepsilon/R)\|t\|\|s\|+(\alpha-\varepsilon/R)\|s\|^2\right)
\end{align*}
which can be made non-negative irrespective of the values of $\|t\|,\|s\|$ by choosing $R$ large enough. 

Therefore, if we split the integral in \eqref{eqn:goal_case3} as the sum of an integral $I_n^{(1)}$ on $B_R$ and an integral $I_n^{(2)}$ on $(L\times L^\perp)\setminus B_R$, we have obtained 
$$|I_n^{(2)}| \leq 2 \int_{(L\times L^\perp)\setminus B_R} e^{r\|t\|-\frac{\|t\|^2}{2}-\alpha\|s\|}\diff t\diff s.$$
Since the map $(t,s)\in L\times L^\perp\mapsto e^{r\|t\|-\frac{\|t\|^2}{2}-\alpha\|s\|}$ is integrable on $L\times L^\perp$, the right-hand side of the display above can be made smaller than $\varepsilon$ by choosing $R$ large enough.

Now, we focus on $I_1^{(n)}$. First, write $I_1^{(n)}$ as
\begin{align*}
	I_1^{(n)} & = \int_{B_R} e^{-t^\top Y_n-\frac{\|t\|^2}{2}-s^\top u}\left(e^{-\left(G_n(t,s)-t^\top Y_n-\frac{\|t\|^2}{2}-s^\top u\right)}\mathds 1_{(t,s)\in C_2\pn}-\mathds 1_{t\in C\cap u^\perp,s\in C_2(t)}\right)\diff t\diff s \\
	& = \int_{B_R} e^{-t^\top Y_n-\frac{\|t\|^2}{2}-s^\top u}e^{-\left(G_n(t,s)-t^\top Y_n-\frac{\|t\|^2}{2}-s^\top u\right)}\left(\mathds 1_{(t,s)\in C_2\pn}-\mathds 1_{t\in C\cap u^\perp,s\in C_2(t)}\right)\diff t\diff s \\
	& \quad \quad \quad + \int_{B_R} e^{-t^\top Y_n-\frac{\|t\|^2}{2}-s^\top u}\left(e^{-\left(G_n(t,s)-t^\top Y_n-\frac{\|t\|^2}{2}-s^\top u\right)}-1\right)\mathds 1_{t\in C\cap u^\perp,s\in C_2(t)}\diff t\diff s \\
	& =: I_3^{(n)}+I_4^{(n)}.
\end{align*}
First, note that, by assumption, 
\begin{equation*}
	|I_3^{(n)}| \leq e^{\varepsilon}\int_{B_R} e^{r\|t\|-\frac{\|t\|^2}{2}-s^\top u}\left|\mathds 1_{(t,s)\in C_2\pn}-\mathds 1_{t\in C\cap u^\perp,s\in C_2(t)}\right|\diff t\diff s.
\end{equation*}

By Lemma~\ref{lemma:conv_sets} and dominated convergence, the right-hand side goes to $0$ as $n\to\infty$, so it becomes smaller than $\varepsilon$ for sufficiently large $n$.

\begin{align*}
	|I_4^{(n)}| & \leq (e^\varepsilon-1)\int_{B_R} e^{r\|t\|-\frac{\|t\|^2}{2}-s^\top u} 1_{t\in C\cap u^\perp,s\in C_2(t)}\diff t\diff s \\
	& \leq \alpha_2(e^\varepsilon-1)
\end{align*}
for some positive constant $\alpha_2$ which only depends on $R$ and $\Theta$. 

Wrapping up, we have proved \eqref{eqn:goal_case3}. Now, the rest of the proof of Theorem~\ref{THMCASE3} follows easily by writing 
\begin{equation*}
	\TV(Q_n,R_n) = \int_{L\times L^\perp}|q_n-r_n| = |1-1/Z_n| +\int_{L\times L^\perp}|q_n/Z_n-r_n|
\end{equation*}
and noting that the first term goes to $0$ in probability as shown above, and so does the second term, using tightness of $(A_n)_{n\geq 1}$ and the exact same arguments as for the proof of \eqref{eqn:goal_case3}.

\end{proof}

\section{Conclusion} \label{sec:conclusion}

In this work, we have established the asymptotic behavior of log-concave posterior distributions in two cases. First, for well-specified models, that is, when the true parameter belongs to the support of the prior distribution. Second, for misspecified models, when the true parameter does not belong to the support of the prior distribution. In the second case, we have shown that a non-classical scaling of the posterior distribution has a non-trivial limit that is supported on the second order tangent set of the support of the prior distribution. In all cases, we have only assumed that the log-likelihood function is concave in the parameter and positive definiteness of the Fisher information matrix. 
In the misspecified setup, we have assumed that the support $\Theta$ of the prior distribution can be written using finitely many twice differentiable, convex constraints. In fact, we believe that similar results can be achieved only assuming that the indicator function of $\Theta$ is twice epi-differentiable at $\theta^*$ along the direction $-\nabla\Phi(\theta^*)$,\footnote{This is equivalent to the projection on $\Theta$ having directional derivatives at $-\nabla\Phi(\theta^*)$, by \cite[Corollary 4.4]{do1992generalized}.} which would recover the assumption made in \cite{brunel2025asymptotics} for the asymptotic distribution of the MLE. This is left for future work. Remarkably, in the well-specified and nearly misspecified cases, we have made no assumptions on $\Theta$ -- other than being convex with non-empty interior. This is similar to the corresponding situations in the context of MLE \cite{brunel2025asymptotics}, where the indicator of $\Theta$ is always twice epi-differentiable alongat $\theta^*$ along the direction $-\nabla\Phi(\theta^*)=0$. 

\bibliographystyle{plain}
\bibliography{Biblio}

\appendix

\section{Proof of Theorem~\ref{THMPROPER}} \label{sec:proof_thm0}

By the law of large numbers, $\Phi_n(\theta)\xrightarrow[n\to\infty]{} \Phi(\theta)$ almost surely, for all fixed $\theta\in\Theta_0$. Therefore, by \cite[Corollary 1]{brunel2025asymptotics}, for all $r>0$ such that $B(\theta^*,r)\subseteq \Theta_0$, 
\begin{equation} \label{eqn:proper0}\sup_{\theta\in B(\theta^*,r)}|\Phi_n(\theta)-\Phi(\theta)|\xrightarrow[n\to\infty]{} 0
\end{equation}
almost surely. Let $\alpha>0$ be the smallest eigenvalue of $\nabla^2\Phi(\theta^*)$. Since $\Phi$ is twice differentiable in a neighborhood of $\theta^*$, there exists $r>0$ such that $B(\theta^*,r)\subseteq \Theta_0$ and for all $\theta\in B(\theta^*,r)$, all eigenvalues of $\nabla^2\Phi(\theta)$ are larger than $\alpha/2$. Hence, for all $\theta\in B(\theta^*,r)\cap \Theta$, 
\begin{equation} \label{eqn:proper1}
	\Phi(\theta)\geq \Phi(\theta_0)+\nabla\Phi(\theta^*)^\top(\theta-\theta^*)+\frac{\alpha}{4}\|\theta-\theta^*\|^2.
\end{equation}
Moreover, the first order optimality condition for $\theta^*$ yields that $u^\top(\theta-\theta^*)\geq 0$ for all $\theta\in\Theta$, so \eqref{eqn:proper1} becomes
\begin{equation} \label{eqn:proper1}
	\Phi(\theta)\geq \Phi(\theta_0)+\frac{\alpha}{4}\|\theta-\theta^*\|^2,
\end{equation}
for all $\theta\in B(\theta^*,r)\cap \Theta$. 
Let $\varepsilon=\alpha r^2/16$. By \eqref{eqn:proper0}, with probability $1$, it holds for all sufficiently large $n\geq 1$ that, simultaneously for all $\theta\in B(\theta^*,r)\cap \Theta$ 
\begin{equation} \label{eqn:proper2}
	\Phi_n(\theta) \geq \Phi(\theta)-\varepsilon \geq \Phi(\theta^*)-\varepsilon+\frac{\alpha}{8}\|\theta-\theta^*\|^2.
\end{equation}
and 
\begin{equation} \label{eqn:proper2}
	\Phi_n(\theta^*) \leq \Phi(\theta^*)+\varepsilon.
\end{equation}
In particular, for all $\theta\in B(\theta^*,r)\cap\Theta$, 
\begin{equation} \label{eqn:proper21}
	\Phi_n(\theta)-\Phi_n(\theta^*)\geq \alpha r^2/8.
\end{equation}

If there is no such $\theta\in\Theta$ with $\|\theta-\theta^*\|>r$, then $\Theta$ is compact and Theorem~\ref{THMPROPER} is trivial. Assume otherwise, and fix $\theta\in\Theta$ with $\|\theta-\theta^*\|>r$. Let $\theta_1=\theta^*+r(\theta-\theta^*)/\|\theta-\theta^*\|$, which is the intersection of $\R_+(\theta-\theta^*)$ with the boundary of $B(\theta^*,r)$. By convexity of $\Theta$, $\theta_1\in\Theta$ and convexity of $\Phi_n$ yields that
$$\Phi_n(\theta_1)\leq \Phi_n(\theta^*)+\frac{r}{\|\theta-\theta^*\|}(\Phi_n(\theta)-\Phi_n(\theta^*)),$$
which can be written as 
\begin{equation}
	\Phi_n(\theta)\geq \frac{\|\theta-\theta^*\|}{r}(\Phi_n(\theta_1)-\Phi_n(\theta^*))+\Phi_n(\theta^*).
\end{equation}
Combined with \eqref{eqn:proper21}, we have obtained that, with probability $1$, for all sufficiently large $n$, it holds that 
\begin{equation*}
	\Phi_n(\theta)\geq \frac{\alpha r\|\theta-\theta^*\|}{8}+\Phi_n(\theta^*), \quad \forall \theta\in \Theta\setminus B(\theta^*,r).
\end{equation*}
Thus, for all $\theta\in \Theta_0\setminus B(\theta^*,r)$, 
$$e^{-n\Phi_n(\theta)}\mathds 1_{\theta\in\Theta} \leq e^{-\frac{n\alpha r\|\theta-\theta^*\|}{8}-\Phi_n(\theta^*)}$$
and, hence, the map is integrable on $\Theta_0\setminus B(\theta^*,r)$ and, therefore, on $\Theta_0$, since it is continuous on $B(\theta^*,r)$ by convexity of $\Phi_n$. This ends the proof of the theorem.

\begin{remark}
	Here, we have proved Theorem~\ref{THMPROPER} in the specific case when $V=\delta_\Theta$. In fact, one can easily check that our proof goes through as long as $V$ is bounded from below. This roughly means that the prior distribution is not too informative, i.e., it does not put too much mass on any specific region of its support. 
\end{remark}

\section{General statement of Theorem~\ref{THMCASE3}} \label{sec:case3_general}

In Section~\ref{sec:case3}, we only stated Theorem~\ref{THMCASE3} in the case when the Fisher information is the identity matrix, i.e., $\nabla^2\Phi(\theta^*)=I_d$. This simplification does not come at a loss of generality since the general case can be recovered by a simple affine transformation. However, the general version requires some additional notation, which we introduce here. 

Denote by $S=\nabla^2\Phi(\theta^*)$ the Fisher information matrix. By assumption, it is positive definite, hence, it equips $\R^d$ with a Euclidean structure. We denote by $\langle x,y\rangle_{S}:=x^\top Sy$ for all $x,y\in\R^d$ the corresponding scalar product. The associated Euclidean norm is denoted by $\|\cdot\|_S$. That is, $\|x\|_S=(x^\top Sx)^{1/2}$.
For a non-empty subset $A\subseteq \R^d$, we denote by $A^{\perp_S}$ the orthogonal subspace to $A$ with respect to $\langle\cdot,\cdot\rangle_S$. That is, 
$$A^{\perp_S}=\{y\in\R^d:\langle x,y\rangle_S = 0, \forall x\in A\}.$$

\begin{theorem} \label{thm:case3_general}
	Let Assumption~\ref{ass:subgradients_integr} hold. Let $\Theta$ be written as $\{\theta\in\R^d=g_j(\theta)\leq 0, j=1,\ldots,p\}$ where $p\geq 1$ is some integer and $g_1,\ldots,g_p:\R^d\to\R$ are convex and twice differentiable. Assume that $\Theta$ has non-empty interior. 
	
Further assume that $\theta^*$ is on the boundary of $\Theta$ and that $\nabla\Phi(\theta^*)\neq 0$ and let $J=\{j=1,\ldots,p: g_j(\theta^*)=0\}$. Let $C$ be the support cone of $\Theta$ at $\theta^*$. 

Denote by $u=\nabla\Phi(\theta^*)$ and $S=\nabla^2\Phi(\theta^*)$. Assume that $S$ is positive definite. 

Let $\tilde J$ be the collection of all $j\in J$ such that $C\cap u_j^\top$ is a facet of $C$ and $J^*$ be the collection of those $j\in \tilde J$ corresponding to a facet that contains the face $C\cap u^\perp$. 

Let $L=\textrm{span}(C\cap \nabla\Phi(\theta^*)^\top)$ and $C_2=\{(t,s)\in L\times L^{\perp_S}:t\in C\cap u^\perp, \frac{1}{2}t^\top \nabla^2 g_j(\theta^*) t+u_j^\top s\leq 0, \forall j\in J(t)\}$ where $J(t)=\{j\in \tilde J:u_j^\top t=0\}$. 

Then, by writing $\theta=\frac{t}{\sqrt n}+\frac{s}{n}$ for a unique $(t,s)\in L\times L^{\perp_S}$, the posterior distribution $Q_n$ of $(t,s)$ satisfies
$$\TV(Q_n,R_n)\xrightarrow[n\to\infty]{} 0$$
in probability, where $R_n$ is the probability distribution on $L\times L^{\perp_S}$ whose density with respect to the Lebesgue measure is proportional to 
$$(t,s)\in L\times L^{\perp} \mapsto e^{-t^\top Y_n-\frac{\|t\|_S^2}{2}-s^\top u}\mathds 1_{(t,s)\in C_2}.$$

\end{theorem}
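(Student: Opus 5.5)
The plan is to reduce Theorem~\ref{thm:case3_general} to Theorem~\ref{THMCASE3} through a linear change of variables that turns $S=\nabla^2\Phi(\theta^*)$ into the identity, and then to check that every object in the statement is transported correctly. Concretely, I would translate so that $\theta^*=0$ and set $\eta=S^{1/2}\theta$. The reparametrized model has negative log-likelihood $\psi(x,\eta)=\phi(x,S^{-1/2}\eta)$. Convexity in $\eta$ is preserved, and the new mean function $\Psi(\eta)=\Phi(S^{-1/2}\eta)$ is still $C^2$ near $0$, with Hessian $I_d$ and gradient $\tilde u=S^{-1/2}u$. The new subgradients are $\tilde U_i=S^{-1/2}U_i$, so Assumption~\ref{ass:subgradients_integr} still holds. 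The prior support becomes $\tilde\Theta=S^{1/2}\Theta=\{\eta:\tilde g_j(\eta)\le 0\}$ with $\tilde g_j=g_j\circ S^{-1/2}$. These constraints are convex and twice differentiable, with $\tilde u_j=S^{-1/2}u_j$ and $\nabla^2\tilde g_j(0)=S^{-1/2}\nabla^2 g_j(0)S^{-1/2}$. The flat prior is mapped to a flat prior up to the Jacobian constant, which cancels on normalization. Theorem~\ref{THMCASE3} therefore applies in the $\eta$ coordinates.

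The next step is to check that the geometric objects correspond under $\eta=S^{1/2}\theta$.
\begin{itemize}
\item The support cone transforms as $\tilde C=S^{1/2}C$, and the active, facet and $J^*$ index sets are unchanged, because $\tilde u_j^\top\eta=u_j^\top S^{-1/2}\eta$.
\item The face $C\cap u^\perp$ maps onto $\tilde C\cap\tilde u^\perp$, so $\tilde L=S^{1/2}L$.
\item Euclidean orthogonality in $\eta$ is $S$-orthogonality in $\theta$, so $\tilde L^{\perp}=S^{1/2}(L^{\perp_S})$.
\end{itemize}
Writing $\eta=\tilde t/\sqrt n+\tilde s/n$ with $(\tilde t,\tilde s)\in\tilde L\times\tilde L^\perp$ then corresponds exactly to writing $\theta=t/\sqrt n+s/n$ with $(t,s)\in L\times L^{\perp_S}$ and $(\tilde t,\tilde s)=(S^{1/2}t,S^{1/2}s)$. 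This map is a bijective linear map, so total variation distance is invariant under it.

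The third step is to transport the limit density. I would substitute into $e^{-\tilde t^\top\tilde Y_n-\|\tilde t\|^2/2-\tilde s^\top\tilde u}$. Since $\tilde Y_n=S^{-1/2}Y_n$, this gives $\tilde t^\top\tilde Y_n=t^\top Y_n$, $\|\tilde t\|^2=\|t\|_S^2$ and $\tilde s^\top\tilde u=s^\top u$. The constraint $\frac12\tilde t^\top\nabla^2\tilde g_j(0)\tilde t+\tilde u_j^\top\tilde s\le 0$ becomes $\frac12 t^\top\nabla^2 g_j(0)t+u_j^\top s\le 0$. The condition $J(\tilde t)=J(t)$ holds because the index sets are unchanged. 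Hence the image of $\tilde C_2$ is $C_2$ as defined in the statement, and the pushforward of $\tilde R_n$ is $R_n$.

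The main obstacle is mostly bookkeeping. I need to ensure that the decomposition uses $L^{\perp_S}$, not $L^\perp$; the "$L^\perp$" in the displayed density should be read as $L^{\perp_S}$. I also need the Jacobian constants of the change of variables on $L$ and on $L^{\perp_S}$ to be absorbed into the normalizing constants. Both points are harmless, because the densities are only specified up to proportionality.
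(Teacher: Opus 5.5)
Your proposal is correct and takes the same route as the paper: Theorem~\ref{thm:case3_general} is obtained from Theorem~\ref{THMCASE3} by exactly this linear reparametrization, which the paper only asserts with the phrase ``the general case can be recovered by a simple affine transformation.'' Under $\eta=S^{1/2}(\theta-\theta^*)$ you check that $C$, $L$, $L^{\perp_S}$, the index sets $J,\tilde J,J^*,J(t)$, the set $C_2$ and the exponent $t^\top Y_n+\frac12\|t\|_S^2+s^\top u$ all transport correctly, with Jacobian constants absorbed by normalization and total variation invariant under the bijection, which supplies the bookkeeping the paper leaves out.
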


\section{Intermediate lemmas}

\begin{lemma} \label{lemma:bound_uTs_finite}
There exists $\alpha>0$ such that for all $t\in C\cap u^\top$ and for all $s\in C_2(t)$, $u^\top s\geq \alpha\|s\|$.
\end{lemma}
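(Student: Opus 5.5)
The idea is to show that every $s\in C_2(t)$ lies in a fixed closed polyhedral cone $K\subseteq L^\perp$ that does not depend on $t$. On $K$ the linear form $s\mapsto u^\top s$ is nonnegative and vanishes only at $0$. A compactness argument on $K$ intersected with the unit sphere then gives the constant $\alpha$.

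\emph{Step 1: $J^*\subseteq J(t)$ for every $t\in C\cap u^\perp$.} Write $-u=\sum_{j\in J^*}\lambda_j u_j$ with all $\lambda_j>0$, as recalled before \eqref{eqn:cone}. Since $t\in C$, we have $u_j^\top t\leq 0$ for every $j\in J$. Since $t\in u^\perp$,
$$0=-u^\top t=\sum_{j\in J^*}\lambda_j u_j^\top t .$$
This is a sum of nonpositive terms with positive weights, so each term vanishes: $u_j^\top t=0$ for all $j\in J^*$. Hence $J^*\subseteq J(t)$.

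\emph{Step 2: $C_2(t)\subseteq K$.} Take $s\in C_2(t)$. For every $j\in J^*\subseteq J(t)$ we have $u_j^\top s\leq -\frac12 t^\top\nabla^2 g_j(0)t$. The right-hand side is $\leq 0$ because $g_j$ is convex, so $\nabla^2 g_j(0)$ is positive semidefinite. Therefore $s$ belongs to
$$K:=\{s\in L^\perp: u_j^\top s\leq 0,\ \forall j\in J^*\},$$
which is a closed cone independent of $t$. On $K$,
$$u^\top s=-\sum_{j\in J^*}\lambda_j u_j^\top s\geq 0 .$$

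\emph{Step 3: strict positivity and compactness.} This is the only step needing care. Suppose $s\in K$ and $u^\top s=0$. Positivity of the $\lambda_j$ forces $u_j^\top s=0$ for all $j\in J^*$. Since $L^\perp=\mathrm{span}\{u_j:j\in J^*\}$, this means $s\in L$. Together with $s\in L^\perp$, this gives $s=0$.

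The set $K\cap\{\|s\|=1\}$ is compact, and it is nonempty unless $K=\{0\}$, in which case the statement is trivial. The continuous function $s\mapsto u^\top s$ attains a minimum $\alpha$ on this set, and $\alpha>0$ by the previous paragraph. By homogeneity, $u^\top s\geq\alpha\|s\|$ for all $s\in K$, hence for all $s\in C_2(t)$ and all $t\in C\cap u^\perp$. The constant $\alpha$ depends only on $u$ and the $u_j$, $j\in J^*$.
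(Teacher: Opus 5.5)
Your proof is correct and follows essentially the same route as the paper's. Both arguments confine $C_2(t)$ to the fixed cone $\{s\in L^\perp: u_j^\top s\le 0,\ \forall j\in J^*\}$, using $J^*\subseteq J(t)$ and the positive semidefiniteness of $\nabla^2 g_j(0)$. Both then use $-u=\sum_{j\in J^*}\lambda_j u_j$ with $\lambda_j>0$ to show $u^\top s>0$ for every nonzero $s$ in that cone, and conclude by compactness and homogeneity. The only differences are that you prove $J^*\subseteq J(t)$, which the paper merely recalls, and you explicitly handle the degenerate case where the cone is $\{0\}$.
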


\begin{proof}
	First, recall that for all $t\in C\cap u^\perp$, the set $J(t)$ contains $J^*$. Moreover, since the $g_j$'s are convex, $t^\top\nabla^2 g_j(0)t$ is always non-negative. Hence, for all $t\in C\cap u^\perp$, $C_2(t)\subseteq \{s\in L^perp: s^\top u_j\leq 0, \forall j\in J^*\}$. 
	
Fix some $s\in L^\perp$ such that $s^\top u_j\leq 0$ for all $j\in J^*$. Assume that $s^\top u=0$. Then,
\begin{equation*}
	0 = -s^\top u = \sum_{j\in J^*}\lambda_j s^\top u_j
\end{equation*}
which is a sum of non-negative terms. Hence, all these terms must vanish, yielding that $s^\top u_j=0$ for all $j\in J^*$. That is, $s\in L$ and, therefore, $s$ must be $0$. We have established that in the closed cone $\{s\in L^\perp:s^\top u_j\leq 0, \forall j\in J^*\}$, $u^\top s>0$ as soon as $s\neq 0$. 
By a compacity argument, we obtain the existence of $\alpha>0$ such that $u^\top s\geq \alpha$ for all $s\in L^\perp$ with $s^\top u_j\leq 0, \forall j\in J^*$ and $\|s\|=1$. This yields the result by homogeneity.	
\end{proof}

We also have the following result. 

\begin{lemma} \label{lemma:bound_uTs}
Let $K=\{s\in L^\perp:\exists t\in C\cap u^\perp, t+s\in\Theta\}$. Then, there exists $\alpha>0$ such that for all $s\in K$, $u^\top s\geq \alpha \|s\|$. 
\end{lemma}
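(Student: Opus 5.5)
Reduce to Lemma~\ref{lemma:bound_uTs_finite}. The point is that every $s\in K$ lies in the closed cone
$$D:=\{s\in L^\perp: u_j^\top s\leq 0,\ \forall j\in J^*\}.$$
On $D$ the proof of Lemma~\ref{lemma:bound_uTs_finite} already gives, by compactness of $D\cap\{\|s\|=1\}$ and homogeneity, a constant $\alpha>0$ with $u^\top s\geq\alpha\|s\|$. So the whole task is the inclusion $K\subseteq D$; the same $\alpha$ then works.

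To prove the inclusion, fix $s\in K$ and $t\in C\cap u^\perp$ with $\theta:=t+s\in\Theta$, and fix $j\in J^*$. Since $\theta\in\Theta$, $g_j(\theta)\leq 0=g_j(0)$. Because $g_j$ is convex and differentiable,
$$0\geq g_j(\theta)-g_j(0)\geq u_j^\top\theta=u_j^\top t+u_j^\top s.$$
Now $t\in C\cap u^\perp$, so $t\in L$. By the description of $L$ as $\{t: u_j^\top t=0,\ j\in J^*\}$, this gives $u_j^\top t=0$. Hence $u_j^\top s\leq 0$ for all $j\in J^*$, and together with $s\in L^\perp$ this means $s\in D$.

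It remains to recall why $u^\top s\geq\alpha\|s\|$ on $D$. Write $-u=\sum_{j\in J^*}\lambda_j u_j$ with $\lambda_j>0$. For $s\in D$ this gives $u^\top s=-\sum_j\lambda_j u_j^\top s\geq 0$. If $u^\top s=0$, every term vanishes, so $u_j^\top s=0$ for all $j\in J^*$; then $s\in L\cap L^\perp=\{0\}$. Thus $u^\top s>0$ on the compact set $D\cap\{\|s\|=1\}$, its minimum $\alpha$ is positive, and homogeneity extends the bound to all of $D$.

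The only delicate point is the identity $L=\{t:u_j^\top t=0,\ j\in J^*\}$, which justifies $u_j^\top t=0$ for $t\in C\cap u^\perp$. It follows directly from $-u=\sum_{j\in J^*}\lambda_j u_j$ with positive $\lambda_j$: for $t\in C$ each $u_j^\top t\leq 0$, and $u^\top t=0$ forces each term to vanish. This is the step to check; the rest is routine.
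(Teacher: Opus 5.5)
Your proof is correct and is essentially the paper's argument. Both show $\Theta\subseteq\{\theta:u_j^\top\theta\leq 0,\ \forall j\in J^*\}$: you do it via the gradient inequality $g_j(\theta)\geq g_j(0)+u_j^\top\theta$, the paper by noting that the $u_j$ are normal vectors at $0$. Both then use $-u=\sum_{j\in J^*}\lambda_ju_j$ with $\lambda_j>0$ to get $u_j^\top t=0$ for $t\in C\cap u^\perp$, and finish with the compactness argument of Lemma~\ref{lemma:bound_uTs_finite}.

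One small precision: the positive combination only gives $L\subseteq\{t:u_j^\top t=0,\ \forall j\in J^*\}$. Your step $s\in L\cap L^\perp=\{0\}$ uses the reverse inclusion, which comes from the facet structure of $C$ ($J^*$ indexes all facets containing $C\cap u^\perp$, as asserted in the paper's setup), not from the positive combination as your last paragraph claims.
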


\begin{proof}
Since the $u_j$'s, $j\in J^*$, are normal vectors to $\Theta$ at $0$, it holds that $\Theta\subseteq \{\theta\in\R^d:u_j^\top\theta\leq 0, \forall j\in J^*\}=:\Theta^*$. Therefore, $\DS \inf\{u^\top s/\|s\|: \exists t\in C\cap u^\perp, t+s\in \Theta\} \geq \inf\{u^\top s/\|s\|: \exists t\in C\cap u^\perp, t+s\in \Theta^*\}$. For all $t\in C$, $u_j^\top t=0$ for all $j\in J^*$, since $N$ and $C$ are polar to each other. If, in addition, $t\in u^\perp$, then $\sum_{j\in J^*}\lambda_ju_j^\top t=0$, so each term in that sum must be $0$, i.e., $u_j^\top t=0$ for each $j\in J^*$. We thus obtain that $\DS \inf\{u^\top s/\|s\|: \exists t\in C\cap u^\perp, t+c\in \Theta\} \geq \inf\{u^\top s/\|s\|: s\in L^\perp, u_j^\top s\leq 0, \forall j\in J^*\}$. 
Using the same argument as in the proof of Lemma~\ref{lemma:bound_uTs_finite}, we have that $u^\top s>0$ for all $s\in L^\perp$ with $u_j^\top s\leq 0, \forall j\in J^*$. Hence, by a compacity argument, there must exist $\alpha>0$ such that $u^\top s\geq\alpha$ for all $s\in L^\perp$ with $u_j^\top s\leq 0, \forall j\in J^*$ and $\|s\|=1$. This yields the desired result. 
\end{proof}

\begin{lemma} \label{lemma:bound_int}
	There exists $a_1>0$ such that for all $t\in C\cap u^\perp$, 
	$$\int_{L^\perp} e^{-s^\top u}\mathds 1_{s\in C_2(t)}\diff t\leq a_1.$$
\end{lemma}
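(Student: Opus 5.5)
The plan is to dominate $C_2(t)$, uniformly in $t$, by a single cone that does not depend on $t$. Then the integral reduces to a fixed quantity controlled by Lemma~\ref{lemma:bound_uTs_finite}. (The integration variable in the statement is clearly $s$, so the integral is $\int_{L^\perp} e^{-s^\top u}\mathds 1_{s\in C_2(t)}\diff s$.)

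\emph{Step 1: a $t$-independent cone.} Fix $t\in C\cap u^\perp$. As recalled in the proof of Lemma~\ref{lemma:bound_uTs_finite}, we have $J^*\subseteq J(t)$. Since each $g_j$ is convex, $t^\top\nabla^2 g_j(0)t\geq 0$. Hence every $s\in C_2(t)$ satisfies $u_j^\top s\leq 0$ for all $j\in J^*$. Thus
$$C_2(t)\subseteq K^*:=\{s\in L^\perp: u_j^\top s\leq 0,\ \forall j\in J^*\},$$
and $K^*$ does not depend on $t$.

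\emph{Step 2: exponential decay on $K^*$.} The proof of Lemma~\ref{lemma:bound_uTs_finite} in fact shows more than its statement: there is $\alpha>0$ with $u^\top s\geq \alpha\|s\|$ for all $s\in K^*$. The argument is that $u^\top s>0$ on $K^*\setminus\{0\}$, which follows by writing $-u=\sum_{j\in J^*}\lambda_j u_j$ with $\lambda_j>0$ and using $K^*\cap L=\{0\}$. Compactness of $K^*\cap\{\|s\|=1\}$ then gives a positive minimum, and homogeneity gives the bound. Consequently, for every $t\in C\cap u^\perp$,
$$\int_{L^\perp} e^{-s^\top u}\mathds 1_{s\in C_2(t)}\diff s\leq \int_{K^*}e^{-\alpha\|s\|}\diff s\leq \int_{L^\perp}e^{-\alpha\|s\|}\diff s.$$

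\emph{Step 3: finiteness.} The last integral is over the finite-dimensional space $L^\perp$, of dimension $k=|J^*|$-rank. In polar coordinates it equals $\mathrm{vol}(S^{k-1})\,\Gamma(k)\,\alpha^{-k}<\infty$. Take $a_1$ to be this value. It is positive and independent of $t$. If $L^\perp=\{0\}$, the integral is just $1$, which is a degenerate but consistent case.

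The only point needing care is Step 2, namely that $K^*\cap\{\|s\|=1\}$ is compact and that $u^\top$ is strictly positive on it. Both facts are exactly the content already established in the proof of Lemma~\ref{lemma:bound_uTs_finite}, so I expect no real obstacle. The essential observation is simply that the curvature terms $\tfrac12 t^\top\nabla^2 g_j(0)t$ only shrink $C_2(t)$, which makes the bound uniform in $t$.
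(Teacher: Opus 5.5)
Your proof is correct and follows the paper's argument: the paper also invokes Lemma~\ref{lemma:bound_uTs_finite} to get $u^\top s\geq \alpha\|s\|$ on $C_2(t)$ with $\alpha$ independent of $t$, then bounds the integral (with $\diff s$, as you note) by $\int_{L^\perp}e^{-\alpha\|s\|}\diff s<\infty$. Your Step 1 only re-derives the inclusion $C_2(t)\subseteq K^*$ already contained in that lemma's proof, and the degenerate case $L^\perp=\{0\}$ cannot occur, since $-u=\sum_{j\in J^*}\lambda_j u_j$ gives $0\neq u\in L^\perp$.
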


\begin{proof}
By Lemma~\ref{lemma:bound_uTs_finite},  there is some $\alpha>0$, independent of $t\in C\cap u^\perp$ such that 
\begin{align*}
	\int_{L^\perp} e^{-s^\top u}\mathds 1_{s\in C_2(t)}\diff s & \leq \int_{L^\perp} e^{-\alpha\|s\|}\mathds 1_{s\in C_2(t)}\diff s \\
	& \leq \int_{L^\perp} e^{-\alpha\|s\|}\diff s <\infty.
\end{align*}

\end{proof}

For the next lemma, recall that $J^*$ is the set of indices $j\in J$ such that $C\cap u_j^\perp$ is a facet of $C$ containing the face $C\cap u^\top$. 

\begin{lemma} \label{lemma:C2_convex}
	The set $C_2$ is convex. Its interior is non-empty and it is given by
\begin{equation} \label{eqn:relint_C}
	\inter(C_2)=\{(t,s)\in L\times L^\perp: t\in \relint(C\cap u^\perp), \frac{1}{2}t^\top\nabla^2 g_j(0)t+u_j^\top s<0, \forall j\in J^*\}.
\end{equation}
\end{lemma}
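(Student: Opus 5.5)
The plan is to prove the three claims of Lemma~\ref{lemma:C2_convex} in turn: convexity, the description \eqref{eqn:relint_C} of the interior, and non-emptiness. Throughout, write $H_j=\nabla^2 g_j(0)$. Each $H_j$ is positive semidefinite because $g_j$ is convex. The one real difficulty is convexity, because the active index set $J(t)$ in the definition of $C_2$ changes with $t$.

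\textbf{Convexity.} Take $(t,s),(t',s')\in C_2$ and $\lambda\in[0,1]$, and set $t_\lambda=\lambda t+(1-\lambda)t'$ and $s_\lambda=\lambda s+(1-\lambda)s'$.
\begin{itemize}
\item Since $C\cap u^\perp$ is a convex cone, $t_\lambda\in C\cap u^\perp$.
\item For every $j\in\tilde J$ we have $u_j^\top t\leq 0$ and $u_j^\top t'\leq 0$, because $t,t'\in C$. Hence $u_j^\top t_\lambda=0$ holds if and only if both terms vanish, which gives $J(t_\lambda)=J(t)\cap J(t')$.
\item Fix $j\in J(t_\lambda)$. The constraint $(t,s)\mapsto \frac12 t^\top H_j t+u_j^\top s$ is a convex function on $L\times L^\perp$, as a PSD quadratic plus a linear term.
\item Since $j$ lies in both $J(t)$ and $J(t')$, both points satisfy this constraint. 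By convexity, $(t_\lambda,s_\lambda)$ satisfies it as well.
\end{itemize}
This gives $(t_\lambda,s_\lambda)\in C_2$. This bookkeeping on the index set is the step I expect to need the most care; the rest is routine.

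\textbf{Interior.} Let $D$ denote the right-hand side of \eqref{eqn:relint_C}. We show $D$ is open, then $D\subseteq C_2$, then the reverse inclusion.
\begin{itemize}
\item \emph{$D$ is open in $L\times L^\perp$.} By definition $L$ is the span of $C\cap u^\perp$, so $\relint(C\cap u^\perp)$ is open in $L$. The strict inequalities define open sets by continuity.
\item \emph{$D\subseteq C_2$.} For $t\in\relint(C\cap u^\perp)$, \eqref{eqn:relint_cone} gives $J(t)=J^*$. So the constraints defining $C_2$ are exactly those for $j\in J^*$, which hold (strictly) on $D$. As $D$ is open, $D\subseteq\inter(C_2)$.
\item \emph{$\inter(C_2)\subseteq D$.} Let $(t,s)\in\inter(C_2)$.
  \begin{itemize}
  \item Since $t$ ranges over an open subset of $L$ inside $C\cap u^\perp$, we get $t\in\relint(C\cap u^\perp)$, and hence $J(t)=J^*$.
  \item Suppose equality $\frac12 t^\top H_jt+u_j^\top s=0$ held for some $j\in J^*$. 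Note that $u_j\in L^\perp$ and $u_j\neq 0$.
  \item The perturbed point $(t,s+\delta u_j)$ stays in $C_2$ for small $\delta>0$, since $(t,s)$ is interior. But for this point the $j$-th constraint equals $\delta\|u_j\|^2>0$, which is a contradiction.
  \end{itemize}
\end{itemize}

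\textbf{Non-emptiness.} First choose $t$.
\begin{itemize}
\item The set $C\cap u^\perp$ is a non-empty convex set (it contains $0$), so it has non-empty relative interior. Pick any $t_0\in\relint(C\cap u^\perp)$.
\end{itemize}
It remains to find $s\in L^\perp$ with $u_j^\top s<-\frac12 t_0^\top H_jt_0$ for all $j\in J^*$. It suffices to find $s_0\in L^\perp$ with $u_j^\top s_0<0$ for all $j\in J^*$ and then take $s=Ms_0$ for a large $M>0$.
\begin{itemize}
\item Since $\Theta$ has non-empty interior, so does $C$. An interior point $w$ of $C$ satisfies $u_j^\top w<0$ for all $j\in J$; I would verify this strictness, which is standard for interior points of the polyhedral cone $\{u_j^\top\theta\leq 0\}$ with $u_j\neq0$.
\item Let $s_0$ be the orthogonal projection of $w$ onto $L^\perp$. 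Because $u_j\in L^\perp$ for $j\in J^*$, we have $u_j^\top s_0=u_j^\top w<0$.
\end{itemize}
Hence $(t_0,Ms_0)\in D=\inter(C_2)$ for $M$ large enough.
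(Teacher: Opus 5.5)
Your proof is correct and follows the paper's argument for convexity (only $J(t_\lambda)\subseteq J(t)\cap J(t')$ is needed; your stated equality holds for $\lambda\in(0,1)$, and the endpoints are trivial) and for the description of $\inter(C_2)$, where you also prove the inclusion the paper calls obvious and perturb $s$ along $u_j\in L^\perp$. For non-emptiness you construct an interior point directly (an interior point of $C$, projected onto $L^\perp$ and scaled up) rather than arguing by contradiction with $t\to 0$ as the paper does, but both arguments rest on the same fact: $\inter(\Theta)\neq\emptyset$ forces the existence of some $s\in L^\perp$ with $u_j^\top s<0$ for all $j\in J^*$.
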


\begin{proof}
\item
\paragraph{Convexity of $C_2$.} 
Let us first check that $C_2$ is a convex set. Let $(t_0,s_0),(t_1,s_1)$ be two elements of $C_2$ and $\lambda\in (0,1)$. Denote by $t_\lambda=(1-\lambda)t_0+\lambda t_1$ and $s_\lambda=(1-\lambda)s_0+\lambda s_1$. Our goal is to show that $(t_\lambda,s_\lambda)\in C_2$. First, $t_\lambda\in C\cap u^\perp$ by convexity of the set $C\cap u^\perp$. Moreover, it is easy to check that $J(t_\lambda)\subseteq J(t_0)\cap J(t_1)$. Now, we simply need to check that $s_\lambda\in C_2(t_\lambda)$, that is, 
\begin{equation} \label{eqn:C2_convex_check}
	\frac{1}{2}t^\top\nabla g_j(0)t+u_j^\top s\leq 0, \quad \forall j\in J(t_\lambda),
\end{equation}
when $(t,s)=(t_\lambda,s_\lambda)$. Since \eqref{eqn:C2_convex_check} is satisfied by both $(t_0,s_0)$ and $(t_1,s_1)$ and the maps $(t,s)\mapsto t^\top\nabla^2 g_j(0)t+u_j^\top s$ are convex, \eqref{eqn:C2_convex_check} is automatically satisfied by $(t_\lambda,s_\lambda)$, which yields the result. 

\item
\paragraph{Validity of \eqref{eqn:relint_C}.} 

Now, let us check \eqref{eqn:relint_C}. The right-left inclusion is obvious, so let us only check the left-right one. 
Let $(t,s)\in \inter(C_2)$. Necessarily, $t$ must be in $\relint(C\cap u^\top)$. Indeed, if this was not the case, there would be some $j\in \tilde J\setminus J^*$ such that $u_j^\top t=0$. Since $j\notin J^*$, there must be some $z\in L$ with $u_j^\top z\neq 0$. Without loss of generality, assume that $u_j^\top z>0$. Then, for all $\varepsilon>0$, $t+\varepsilon z\notin C\cap u^\top$, so $(t+\varepsilon z,s)$ cannot be in $C_2$ and $(t,s)$ cannot be an interior point of $C_2$. 

Now, let us assume, for the sake of contradiction, that $\frac{1}{2}t^\top \nabla^2 g_j(0)t+u_j^\top s=0$ for some $j\in J^*$. Note that by definition of $J^*$, $L\subseteq u_j^\perp$. Moreover, $u_j\neq 0$, also by definition of $J^*$, since $C\cap u_j^\perp$ is assumed to be a facet of $C$ -- so it cannot be the whole $C$. Hence, $u_j\notin L$. That is, there exists $y\in L^\perp$ with $u_j^\top y\neq 0$. Again with no loss of generality, we assume that $u_j^\top y>0$. Hence, for all $\varepsilon>0$, $\frac{1}{2}t^\top \nabla^2 g_j(0)t+u_j^\top (s+\varepsilon y)>0$ so $(t,s+\varepsilon y)$ cannot be in $C_2$. 

\item
\paragraph{Non-empty interior of $C_2$.} 

Assume, for the sake of contradiction, that $\inter(C_2)$ is empty. Since the relative interior of any non-empty convex set is always non-empty, $\relint(C\cap u^\top)\neq\emptyset$. Hence, for all $t\in\relint(C\cap u^\perp)$ and for all $s\in L^\perp$, there must exist $j\in J^*$ such that $\frac{1}{2}t^\top \nabla^2 g_j(0)t+u_j^\top s\geq 0$. That is, for all $s\in L^\perp$, we have:
$$\max_{j\in J^*}\left(\frac{1}{2}t^\top \nabla^2 g_j(0)t+u_j^\top s\right), \quad \forall t\in \relint(C\cap u^\perp).$$
By letting $t$ go to $0$ in the previous display, we obtain that for all $s\in L^\perp$, there must exist some $j\in J^*$ such that $u_j^\top s\geq 0$. In other words, the set $\{s\in L^\perp:u_j^\top s <0, \forall j\in J^*\}$ must be empty. This is the relative interior of the convex polyhedron $P:=\{s\in L^\perp:u_j^\top s \leq 0, \forall j\in J^*\}$ in $L^\perp$. Therefore, the convex set $L\times P$ must have empty interior. However, this convex set contains $\{(t,s)\in L\times L^\perp:t+s\in\Theta\}$, which is isometric to $\Theta$ and $\Theta$ is assumed to have non-empty interior. This yields a contradiction. 

\end{proof}

\begin{lemma} \label{lemma:conv_sets}
	For almost all\footnote{with respect to the Lebesgue measure of $L\times L^\perp$} $(t,s)\in L\times L^\perp$, 
	$$\mathds 1_{(t,s)\in C_2\pn} \xrightarrow[n\to\infty]{} \mathds 1_{(t,s)\in C_2}.$$
\end{lemma}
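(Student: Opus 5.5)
We prove the claim pointwise: for each $(t,s)\in L\times L^\perp$ outside a Lebesgue-null set, $(t,s)\in C_2\pn$ for all large $n$ if $(t,s)\in\inter(C_2)$, and $(t,s)\notin C_2\pn$ for all large $n$ if $(t,s)\notin C_2$. The boundary of the convex set $C_2$ (convexity is Lemma~\ref{lemma:C2_convex}) is Lebesgue-null, so these two facts suffice. Throughout, set $\theta_n=t/\sqrt n+s/n$. Then $(t,s)\in C_2\pn$ if and only if $g_j(\theta_n)\le 0$ for all $j=1,\dots,p$.

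\emph{Inactive constraints.} For $j\notin J$ we have $g_j(0)<0$. Since $\theta_n\to0$, continuity gives $g_j(\theta_n)<0$ for large $n$, so these constraints never matter in the limit.

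\emph{Active constraints.} For $j\in J$, twice differentiability at $0$ gives
\[
n\,g_j(\theta_n)=\sqrt n\,u_j^\top t+u_j^\top s+\tfrac12 t^\top\nabla^2 g_j(0)t+o(1).
\]
The $O(n^{-1/2})$ cross term $t^\top\nabla^2g_j(0)s/\sqrt n$ and the Peano remainder, which is $o(\|\theta_n\|^2)n=o(1)$, are absorbed into the $o(1)$.

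\emph{Case $(t,s)\in\inter(C_2)$.} By \eqref{eqn:relint_C}, $t\in\relint(C\cap u^\perp)$, so $J(t)=J^*$ by \eqref{eqn:relint_cone}. For $j\in J$ with $u_j^\top t<0$, the leading term $\sqrt n\,u_j^\top t\to-\infty$, so $g_j(\theta_n)<0$ eventually. For $j\in J$ with $u_j^\top t=0$, the expansion reduces to $\frac12 t^\top\nabla^2g_j(0)t+u_j^\top s+o(1)$. If $j\in J^*$ this limit is strictly negative by \eqref{eqn:relint_C}, so $g_j(\theta_n)<0$ eventually.

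\emph{Main obstacle.} The delicate part is the indices $j\in J$ with $u_j^\top t=0$ but $j\notin J^*$: either $j\in J\setminus\tilde J$ (redundant constraints), or $j\in\tilde J\setminus J^*$ (excluded by $t$ being in the relative interior). I would first try to show that any $j\in J$ with $u_j^\top t=0$ for $t\in\relint(C\cap u^\perp)$ satisfies $u_j^\top x=0$ for all $x\in L$, hence $u_j\in L^\perp=\mathrm{span}\{u_k:k\in J^*\}$. Then I would try to show that $u_j$ is a nonnegative combination of the $u_k$, $k\in J^*$; this holds since $u_j\in N$ and $u_j$ vanishes on the face. One would then like to transfer the strict inequality. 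However, the curvature terms $\nabla^2g_j(0)$ need not combine accordingly, so this step may genuinely fail. If it does, I would redefine $C_2$ using all $j\in J$ with $u_j^\top t=0$, which is the natural second-order tangent set of \cite{bonnans2013perturbation}, and note that the lemma holds for that set. Alternatively, one can assume $J$ consists of non-redundant constraints.

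\emph{Case $(t,s)\notin C_2$.} If $t\notin C$, some $j\in J$ has $u_j^\top t>0$, so $n\,g_j(\theta_n)\to+\infty$. If $t\in C$ but $t\notin u^\perp$, then $u^\top t<0$ because $-u\in N$; writing $-u=\sum_{j\in J^*}\lambda_ju_j$ gives some $u_j^\top t>0$, and the same conclusion follows. Here $t\in L$ already, so this sub-case is vacuous except as stated. Otherwise $t\in C\cap u^\perp$ and some $j\in J(t)$ has $\frac12t^\top\nabla^2g_j(0)t+u_j^\top s>0$. Then the expansion shows $n\,g_j(\theta_n)$ converges to a positive limit, so $(t,s)\notin C_2\pn$ for large $n$.

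Together with $\partial C_2$ being null, these cases give the almost-everywhere convergence of the indicators.
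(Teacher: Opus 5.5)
\textbf{Verdict: genuine gap, but it lies in the statement, not in your reasoning.} Your argument follows the paper's proof step for step: discard inactive constraints, expand the active ones at scales $\sqrt n$ and $1$, treat $\inter(C_2)$ and the complement of $C_2$ separately, and use that $\partial C_2$ is null. The step you single out as the main obstacle is a real gap, and it cannot be closed because the lemma is false as stated.

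\textbf{Counterexample.} Take $d=3$, $g_1(\theta)=\theta_1$, $g_2(\theta)=\theta_2$, $g_3(\theta)=\theta_1+\theta_2+\theta_3^2$, and $u=-(1,1,0)$.
\begin{itemize}
\item Then $C=\{\theta_1\le 0,\theta_2\le 0\}$ and $C\cap u^\perp=L=\mathrm{span}(e_3)$.
\item Also $\tilde J=J^*=\{1,2\}$, because $C\cap u_3^\perp=L$ is not a facet.
\item With $J(t)\subseteq\tilde J$, the paper's set is therefore $C_2=\{(t,s):s_1\le 0,\ s_2\le 0\}$.
\item But $n\,g_3(t/\sqrt n+s/n)=s_1+s_2+t_3^2$ exactly. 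Hence $C_2\pn=\{s_1\le 0,\ s_2\le 0,\ s_1+s_2+t_3^2\le 0\}$ for every $n$.
\item The indicators disagree on $\{s_1<0,\ s_2<0,\ s_1+s_2+t_3^2>0\}$, which has positive measure.
\end{itemize}
This is exactly your scenario: $u_3=u_1+u_2$ lies in the cone of the $J^*$ normals, but $\nabla^2 g_3(0)\neq\nabla^2 g_1(0)+\nabla^2 g_2(0)$.

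The paper's proof has the same hole. From $J(t)=J^*$, with $J(t)$ ranging over $\tilde J$ as in \eqref{eqn:relint_cone}, it concludes $u_j^\top t<0$ for all $j\in J\setminus J^*$. That fails for $j=3$ above. The proof also switches mid-argument to $J(t)=\{j\in J:u_j^\top t=0\}$, inconsistent with the definition used for $C_2$.

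\textbf{Your repair is the right one.} Define $C_2$ and $C_2(t)$ with $J(t)=\{j\in J:u_j^\top t=0\}$, or assume every active constraint is facet-defining. Your case analysis then goes through with one adjustment.
\begin{itemize}
\item \eqref{eqn:relint_C} must require strict inequality for every active $j$ with $u_j\in L^\perp$, not only for $j\in J^*$. This follows by perturbing $s$ along $u_j$, provided $u_j\neq 0$ for active $j$ (true, e.g., under Slater's condition).
\item Alternatively, you can skip the interior characterization. Exclude the null set where either $u_j^\top t=0$ for some active $j$ with $u_j\notin L^\perp$, or $\tfrac12 t^\top\nabla^2 g_j(0)t+u_j^\top s=0$ for some active $j$ with $0\neq u_j\in L^\perp$. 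Off that set, every limit in your expansion is nonzero. So $\mathds 1_{(t,s)\in C_2\pn}$ is eventually constant and equals the indicator of the repaired $C_2$.
\end{itemize}

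\textbf{Minor sign slip.} For $t\in C$ one has $u^\top t\ge 0$, not $<0$, because $-u\in N$. As you note, that sub-case is vacuous anyway, since $L\subseteq u^\perp$.
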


\begin{proof}

Recall that $J$ is the set of indices of active constraints at $0$, i.e., $J=\{j=1,\ldots,p:g_j(0)=0\}$. We have defined $\tilde J$ as the collection of all such $j\in J$ for which $C\cap u_j^\perp$ is a facet of $C$ and $J^*\subseteq J$ as the collection of such $j$'s for which $C\cap u_j^\perp$ is a facet of $C$ containing the face $C\cap u^\perp$.
Therefore, one can write
$$C\cap u^\perp=\{t\in L: u_j^\top t\leq 0, \forall j\in \tilde J, u_j^\top t=0, \forall j\in J^*\}.$$
Therefore the relative interior of $C\cap u^\perp$ in $L$ is 
$$\relint(C\cap u^\perp)=\{t\in L: u_j^\top t < 0, \forall j\in \tilde J\setminus J^*, u_j^\top t, \forall j\in J^*\},$$
that is, $\relint(C\cap u^\perp)=\{t\in C\cap u^\perp:J(t)=J^*\}$ where we recall that $J(t)=\{j\in \tilde J:u_j^\top t=0\}$.

Fix $(t,s)\in L\times L^\perp$. Recall that $(t,s)\in C_2\pn$ if and only if $t/\sqrt n+s/n\in \Theta$. Since the functions $g_j, j=1,\ldots,p$, defining $\Theta$ are convex, they must be continuous on $\R^d$. Hence, for all large enough $n$ (how large -- this depends on $(t,s)$ which is fixed), $t/\sqrt n+s/n\in \Theta$ if and only if $g_j(t/\sqrt n+s/n)\leq 0$ for all $j\in J$. Indeed, for $j\notin J$, $g_j(0)<0$ by definition, so $g_j(\theta)$ remains negative for all $\theta$ in a vicinity of the origin. Therefore, for large enough $n$, 
$$\mathds 1_{(t,s)\in C_2\pn}=\mathds 1_{g_j(t/\sqrt n+s/n)\leq 0, \forall j\in J}.$$
Recall the definition of $J(t)=\{j\in J:u_j^\top t=0\}$.
Fix $j\in J$. If $j\in J(t)$, write 
\begin{equation} \label{eqn:exp_gj_0}
	g_j(t/\sqrt n+s/n) = \frac{1}{n}\left(\frac{1}{2}t^\top \nabla^2 g_j(0)t+u_j^\top s+\varepsilon_n^{(j)}\right)
\end{equation}
for some $\varepsilon_n^{(j)}\xrightarrow[n\to\infty]{}0$. If $j\notin J(t)$, write 
\begin{equation} \label{eqn:exp_gj_1}
	g_j(t/\sqrt n+s/n) = \frac{1}{\sqrt n}\left(u_j^\top t+\eta_n^{(j)}\right)
\end{equation}
for some $\eta_n^{(j)}\xrightarrow[n\to\infty]{}0$. Note that $\varepsilon_n^{(j)}$ and $\eta_n^{(j)}$ depend on $s$ and $t$, but these are fixed here.

Therefore, for all large enough $n$,
$$(t,s)\in C_2\pn \iff \begin{cases} u_j^\top t+\eta_n^{(j)}\leq 0, \forall j\in J\setminus J(t) \\ \frac{1}{2}t^\top\nabla^2 g_j(0)t+u_j^\top s+\varepsilon_n^{(j)}\leq 0, \forall j\in J(t).\end{cases}$$

Assume that $(t,s)\in\inter(C_2)$. That is, by Lemma~\ref{lemma:C2_convex}, $t\in \relint(C\cap u^\top)$ and $s\in\relint(C_2(t))$. That is, by \eqref{eqn:relint_cone}, $t\in C\cap u^\perp$ with $J(t)=J^*$. Hence, for all $j\in J\setminus J^*$, $u_j^\top t<0$. Since the $\eta_n^{(j)}, j\notin J^*$, go to $0$, it must hold that $u_j^\top t+\eta_j^{(n)}\leq 0$ for all $j\notin J^*$, for all sufficiently large $n$. Moreover, since $s\in \relint(C_2(t))$, $\frac{1}{2}t^\top\nabla^2 g_j(0)t+u_j^\top s<0$ for all $j\in J^*$. Then, since $\varepsilon_n^{(j)}\xrightarrow[n\to \infty]{} 0$ for all $j\in J(t)=J^*$, it must be that $\frac{1}{2}t^\top\nabla^2 g_j(0)t+u_j^\top s+\varepsilon_n^{(j)}\leq 0$ for all $j\in J(t)$, for all sufficiently large $n$. 
We thus have shown that if $(t,s)\in\relint(C_2)$, then $\mathds 1_{(t,s)\in C_2\pn}=1=\mathds 1_{(t,s)\in C_2}$ for all sufficiently large $n$. 

Now, let $(t,s)\in L\times L^\perp$ with $(t,s)\notin C_2$. That is, $t\notin C\cap u^\top$, or $t\in C\cap u^\top$ but $\frac{1}{2}t^\top\nabla^2 g_j(0)t+u_j^\top s>0$ for some $j\in J(t)$. Similarly as above, we obtain that $\mathds 1_{(t,s)\in C_2\pn}=0=\mathds 1_{(t,s)\in C_2}$ for all sufficiently large $n$. 

Finally, since $C_2$ is a convex set by Lemma~\ref{lemma:C2_convex}, the Lebesgue measure of its boundary is zero, so we have proven that for almost all $(t,s)\in L\times L^\perp$, 
$$\mathds 1_{(t,s)\in C_2\pn} \xrightarrow[n\to\infty]{}\mathds 1_{(t,s)\in C_2}.$$ 

\end{proof}

\end{document}